\newlength{\depthofsumsign}
\let\I\@undefined
\newbox\shell
\newcommand{\dia}[2]{\setbox\shell=\hbox{\begin{picture}(180,120)(-90,-60)#1
\put(-90,-60){\makebox(180,120)[b]{\large #2}}\end{picture}}\dimen0=\ht
\shell\multiply\dimen0by7\divide\dimen0by16\raise-\dimen0\box\shell\hfill}
\DeclareSymbolFont{operators}{OT1}{txr}{m}{n}
\def\operator@font{\mathgroup\symoperators}
\DeclareSymbolFont{italic}{OT1}{txr}{m}{it}
\DeclareSymbolFontAlphabet{\mathrm}{operators}
\DeclareMathAlphabet{\mathbf}{OT1}{txr}{bx}{n}
\DeclareMathAlphabet{\mathit}{OT1}{txr}{m}{it}
\SetMathAlphabet{\mathit}{bold}{OT1}{txr}{bx}{it}
\DeclareSymbolFont{letters}{OML}{txmi}{m}{it}
\DeclareSymbolFont{lettersA}{U}{txmia}{m}{it}
\DeclareSymbolFontAlphabet{\mathfrak}{lettersA}
\DeclareSymbolFont{symbols}{OMS}{txsy}{m}{n}
\DeclareMathOperator{\D}{d}
\DeclareMathOperator{\I}{Im}
\DeclareMathOperator{\R}{Re}
\DeclareMathOperator{\cov}{cov}
\def\XXint#1#2#3{{\setbox0=\hbox{$#1{#2#3}{\int}$}
     \vcenter{\hbox{$#2#3$}}\kern-.5\wd0}}
\def\qed{\hfill$ \blacksquare$}
\def\eor{\hfill$ \square$}
\theoremstyle{plain}
\newtheorem{theorem}{Theorem}[section]
\newtheorem{proposition}[theorem]{Proposition}
\newtheorem{lemma}[theorem]{Lemma}
\newtheorem{corollary}[theorem]{Corollary}
\newenvironment{remark}[1][Remark]{\begin{trivlist}
\item[\hskip \labelsep {\bfseries #1}]}{\end{trivlist}}
\theoremstyle{definition}
\numberwithin{equation}{section}
\tikzset{>=stealth}
\begin{document}

\selectlanguage{english}
\title{Some geometric relations for equipotential curves}
\author{Yajun Zhou}
\address{Program in Applied and Computational Mathematics (PACM), Princeton University, Princeton, NJ 08544; Academy of Advanced Interdisciplinary Studies (AAIS), Peking University, Beijing 100871, P. R. China }
\email{yajunz@math.princeton.edu, yajun.zhou.1982@pku.edu.cn}

\thanks{\textit{Keywords}: harmonic function, level sets, curvature, diffusion limited aggregation  \\\indent\textit{MSC 2010}: 31A05, 53A04, 82C24
\\\indent * This research was supported in part  by the Applied Mathematics Program within the Department of Energy
(DOE) Office of Advanced Scientific Computing Research (ASCR) as part of the Collaboratory on
Mathematics for Mesoscopic Modeling of Materials (CM4)}
\date{\today}

\maketitle


\vspace{-1.5em}

\begin{abstract}Let $U(\bm r),\bm r\in\Omega\subset \mathbb R^2$ be a harmonic function that solves an exterior Dirichlet problem. If all the level sets of $U(\bm r),\bm r\in\Omega$ are smooth Jordan curves, then there are several geometric inequalities that correlate the  curvature $\kappa(\bm r) $ with the  magnitude of gradient $ |\nabla U(\bm r)|$ on each level set (``equipotential curve''). One of such inequalities is $ \langle [\kappa(\bm r)-\langle\kappa(\bm r)\rangle][|\nabla U(\bm r)|-\langle |\nabla U(\bm r)|\rangle]\rangle\geq0$, where $ \langle \cdot\rangle$ denotes average over a level set, weighted by the arc length of the Jordan curve. We prove such a geometric inequality by constructing  an entropy  for each level set $U(\bm r)=\varphi $, and showing that such an entropy is convex in $\varphi$. The geometric inequality for  $\kappa(\bm r) $  and  $ |\nabla U(\bm r)|$  then follows from  convexity and monotonicity of our entropy formula. A few other geometric relations for equipotential curves are also built on a convexity argument. \end{abstract}

\pagenumbering{roman}


\pagenumbering{arabic}

\section{Introduction}

\subsection{Background and motivations}
Consider a non-constant harmonic function $ U(\bm r)$ that satisfies the Laplace equation\begin{align}\nabla^2 U(\bm r)=0,\quad \bm r\in\Omega\subset \mathbb R^2\label{eq:2D_Laplace}\end{align}in an unbounded  domain $ \Omega$ whose boundary $ \partial \Omega$ is a smooth Jordan curve. We may further impose a  Dirichlet boundary condition that  $ U(\bm r),\bm r\in\partial \Omega$  remains a constant. (By the Riemann mapping theorem in complex analysis, we can deduce from this boundary condition  that all the level sets of $ U(\bm r),\bm r\in\Omega\subset \mathbb R^2$ are  smooth Jordan curves, and $ |\nabla U(\bm r)|\neq0,\bm r\in\Omega\cup\partial \Omega$.) The total flux across the level set  $ \partial \Omega$ is prescribed as\begin{align}
-\oint_{\partial\Omega}\bm n\cdot\nabla U(\bm r)\D s=\Phi>0,\label{eq:2D_flux}
\end{align}  where $\bm n$ denotes outward unit normal vector. (This is also the total flux across every level set of  $ U(\bm r),\bm r\in\Omega\subset \mathbb R^2$, according to the Laplace equation and Green's theorem.) As $ |\bm r|$ goes to infinity, we have the following asymptotic behavior:\begin{align}
 U(\bm r)\sim -\frac{ \Phi}{2\pi}\log\frac{2\pi|\bm r|}{L_{\partial \Omega}},\label{eq:U_inf}
\end{align}   where $ L_{\partial \Omega}=\oint_{\partial\Omega}\D s$ denotes circumference of the boundary.

Such a 2-dimensional exterior Dirichlet problem (``2-exD'' hereafter) is  found in at least three different physical contexts: electrostatic equilibrium  \cite{Jackson:EM}, Hele-Shaw flow \cite{Howison1986,Saffman1986}, and diffusion-limited aggregation \cite{Kakutani1944Brown2D,DLA-PRL,DLA-PRB}  (see Table \ref{tab:2-exD} for details).
 According to \textit{electricians' folklore} (Fig.~\ref{fig:Cassini}), the magnitude $E(\bm r)=|\bm E(\bm r)|$ of the static electric field $\bm E(\bm r)=-\nabla U(\bm r) $ is large around sharp points (with high curvatures) at the air-metal interface $\partial\Omega $. Witten and Sander  \cite[{\S}II.A, p.~27]{DLA-PRB} have  used  this folklore to explain the dendritic morphology in stochastic growth processes, by an analogy between electrostatic  equilibrium and diffusion-limited aggregation.
\begin{table}\caption{Various examples of   exterior Dirichlet problems for harmonic functions in $ \mathbb R^2$\label{tab:2-exD}}\begin{footnotesize}\begin{tabular}{p{.22\textwidth}|p{.22\textwidth}p{.2\textwidth}p{.27\textwidth}}\hline\hline&Electrostatic equilibrium&Hele-Shaw flow&Diffusion-limited aggregation\\\hline Harmonic function $U(\bm r)$  &Electrostatic potential&Fluid pressure &Local density of  particles  \\Region $ \Omega$&Air&Highly viscous phase&Suspension of  particles \\Region $ \mathbb R^2\smallsetminus(\Omega\cup\partial\Omega)$&Metal&Inviscid phase&Particle aggregates \\{Theoretical implications of larger $|\nabla U(\bm r)|,\bm r\in\partial\Omega$}& Higher like\-li\-hood of electrostatic sparks at air-metal interface&Higher local speed of   moving  interface between two fluids&Higher  probability of a random-walking particle hitting a pre-existing aggregate\\{Experimental observations related to larger $ \kappa(\bm r),\bm r\in\partial \Omega$}&Higher tendency to give off  more sparks (``stronger field at sharper points'')  &Higher tendency to stretch out, forming tree-like patterns (``dendritic growth'') &Higher tendency to invite more aggregating particles, forming tree-like patterns (``dendritic growth'')  \\\hline\hline\end{tabular}\end{footnotesize}\end{table}
\begin{figure*}[t]
\begin{minipage}{.68\textwidth}
\begin{center}\begin{tikzpicture}[scale=3]
\draw[thick,fill=gray!25!white] plot[samples=500,domain={-sqrt(1+2^(1/12))}:{sqrt(1+2^(1/12))}] ({\x},{sqrt(-1-\x*\x+sqrt(4*\x*\x+2^(1/6)))})--plot[samples=500,domain={sqrt(1+2^(1/12))}:{-sqrt(1+2^(1/12))}] ({\x},{-sqrt(-1-\x*\x+sqrt(4*\x*\x+2^(1/6)))})--plot[samples=500,domain={-sqrt(1+2^(1/12))}:{sqrt(1+2^(1/12))}] ({\x},{sqrt(-1-\x*\x+sqrt(4*\x*\x+2^(1/6)))});

\foreach \y in {2,3,4,5,6} \draw[gray] plot[samples=500,domain={-sqrt(1+2^(\y/6)/2^(1/12))}:{sqrt(1+2^(\y/6)/2^(1/12))}] ({\x},{sqrt(abs(-1-\x*\x+sqrt(4*\x*\x+2^(\y/3)/2^(1/6))))})--plot[samples=500,domain={sqrt(1+2^(\y/6)/2^(1/12))}:{-sqrt(1+2^(\y/6)/2^(1/12))}] ({\x},{-sqrt(abs(-1-\x*\x+sqrt(4*\x*\x+2^(\y/3)/2^(1/6))))})--plot[samples=500,domain={-sqrt(1+2^(\y/6)/2^(1/12))}:{sqrt(1+2^(\y/6)/2^(1/12))}] ({\x},{sqrt(abs(-1-\x*\x+sqrt(4*\x*\x+2^(\y/3)/2^(1/6))))});

\draw(0,.1)node[below]{$\mathbb R^2\smallsetminus(\Omega\cup\partial\Omega) $};

\end{tikzpicture}\end{center}
\end{minipage}\begin{minipage}{.32\textwidth}\captionsetup{width=\linewidth}\caption{An illustration of electricians' folklore. Some level sets of a 2-dimensional harmonic function  $U(\bm r),\bm r\in\Omega$ are displayed as \textit{gray} curves. The boundary $ \partial \Omega$, drawn in  \textit{black}, is also a level set (``equipotential curve''). Graphically speaking, the values of $ |\nabla U(\bm r)|$ around the sharp tips (which ``stick out'') of level sets are larger (with much denser spacing between level sets)  than those around the depressed pits (which ``cave in'').  \label{fig:Cassini} }\end{minipage}
\end{figure*}
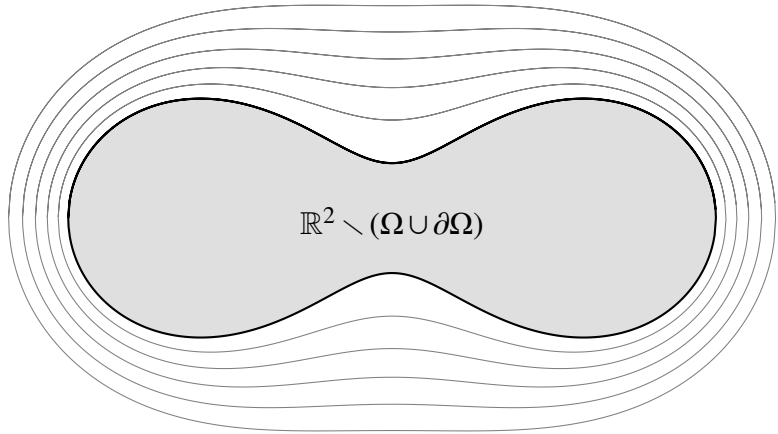

In all these 2-exD problems, the overall shape of a level set affects the gradient  $ \nabla U(\bm r)$ thereupon in a non-local manner: it is technically incorrect to say that a large local curvature causes a large local value of $ |\nabla U(\bm r)| $. Instead of seeking a \textit{pointwise}  \textit{causation}  that ties curvature to gradient, we will look for \textit{statistical correlations} between the curvature of the level set  and the magnitude of   $ \nabla U(\bm r)$, such as  the geometric inequality stated in the abstract.

In the present work and its sequel, the mathematical motivations behind our statistical interpretation of electricians' folklore  are entropy monotonicity techniques developed by Colding \cite{Colding2012} and Colding--Minicozzi \cite{ColdingMinicozzi2013,ColdingMinicozzi2014}.
These techniques not only inject geometric insights into level sets of harmonic
functions, but also add to our geometric understanding of many other types of (linear or non-linear) partial differential equations that arise from classical and quantum physics.
\subsection{Statement of results}
Let $ \kappa(\bm r)$ be the signed curvature of the level set at a point $ \bm r\in\Omega\cup\partial\Omega\subset\mathbb R^2$. In this work, we follow the  convention that the unit circle has positive curvature: $ \kappa=+1$.
For simplicity, we will write $ E(\bm r)=|\nabla U(\bm r)|$. On a level set $ \Sigma$, define the geometric covariance as \begin{align}\cov_{\Sigma}(f_1(\bm r),f_2(\bm r)):=\left\langle \left[f_1(\bm r)-\left\langle f_1(\bm r) \right\rangle_{\Sigma}\right]\left[ f_2(\bm r)-\left\langle f_2(\bm r)\right\rangle_{\Sigma} \right]\right\rangle_{\Sigma}, \end{align}where\begin{align}
\left\langle f(\bm r) \right\rangle_{\Sigma}=\frac{1}{L_\Sigma}\oint_{\Sigma}f(\bm r)\D s=\frac{\oint_{\Sigma}f(\bm r)\D s}{\oint_{\Sigma}\D s}
\end{align}is the average over $\Sigma$, weighted by arc length.

A good half of the main results in the current work are gathered in the theorem below.

\begin{theorem}[Sharp geometric inequalities for 2-exD]\label{thm:2-exD}We have the following inequalities that correlate
$ \kappa(\bm r)$ with $E(\bm r) $  on each level set $ \Sigma$ in 2-exD problems:{\allowdisplaybreaks\begin{align}
\cov_{\Sigma}\left(\frac{\kappa(\bm r)}{E(\bm r)},E(\bm r)\right)\geq{}&0,\label{eq:cov1}\\\left\langle \frac{1}{E(\bm r)} \right\rangle_{\Sigma}\geq{}&\left\langle \frac{\kappa(\bm r)}{E(\bm r)} \right\rangle_{\Sigma}\left\langle\bm n\cdot\bm r\right\rangle_{\Sigma},\label{eq:Longinetti_mono}\\\cov_{\Sigma}(\kappa(\bm r),E(\bm r))\geq{}&0,\label{eq:cov2}\\\cov_{\Sigma}\left( \frac{\kappa(\bm r)}{2\pi},\log \frac{E(\bm r)L_{\Sigma}}{\Phi}\right)\geq{}&\cov_{\Sigma}\left( \frac{E(\bm r)}{\Phi},\log \frac{E(\bm r)L_\Sigma}{\Phi}\right),\label{eq:cov3}\\\left\langle \left[\bm n\times\nabla\frac{\kappa(\bm r)}{E(\bm r)}\right]\cdot\left[\bm n\times\nabla\frac{1}{E(\bm r)} \right] \right\rangle_{\Sigma}\leq{}&0,\label{eq:grad_prod}\\\left\langle \frac{\kappa(\bm r)}{[E(\bm r)]^{2}} \right\rangle_\Sigma\leq{}&\frac{2\pi}{\Phi}\left\langle \frac{1}{E(\bm r)} \right\rangle_\Sigma,\label{eq:Longinetti_cor}
\end{align}}where $ \bm n\times\nabla f(\bm r)$ denotes tangential gradient of a scalar $  f(\bm r)$ (up to a 90$^\circ$ rotation). Furthermore, these inequalities are strict, unless $ \partial\Omega$ is a circle.\end{theorem}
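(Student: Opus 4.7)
My plan is to push every inequality in Theorem~\ref{thm:2-exD} through a single conformal framework. Let $V(\bm r)$ be a (multivalued, $\Phi$-periodic) harmonic conjugate of $U(\bm r)$; the map $\bm r \mapsto (U,V)$ is conformal from $\Omega$ onto a half-cylinder, the modulus $E(\bm r) = |\nabla U(\bm r)|$ satisfies the crucial identity that $h := \log E$ is harmonic, hence $h_{UU} + h_{VV} = 0$ in $(U,V)$-coordinates, and on each level set $\Sigma_\varphi = \{U = \varphi\}$ the arc length element is $ds = dV/E$. Every level-set integral is therefore a $V$-integral of period $\Phi$:
\begin{equation*}
\oint_{\Sigma_\varphi} f\, ds = \int_0^\Phi \frac{f(\varphi, V)}{E(\varphi, V)}\, dV.
\end{equation*}
Two global identities will see repeated use: $\langle E \rangle_\Sigma = \Phi/L_\Sigma$ (from the flux condition \eqref{eq:2D_flux}) and $\langle \kappa \rangle_\Sigma = 2\pi/L_\Sigma$ (Gauss--Bonnet). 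Elementary computations in $(U,V)$ yield $\kappa = E h_U$, $\partial_\varphi E = \kappa$, and
\begin{equation*}
L(\varphi) = \int_0^\Phi e^{-h}\, dV, \qquad L'(\varphi) = -\oint_{\Sigma_\varphi}\frac{\kappa}{E}\, ds, \qquad A'(\varphi) = -\oint_{\Sigma_\varphi}\frac{1}{E}\, ds,
\end{equation*}
where $A(\varphi)$ is the area enclosed by $\Sigma_\varphi$. Each of \eqref{eq:cov1}--\eqref{eq:Longinetti_cor} then becomes a monotonicity or log-convexity statement for an explicit $V$-integral functional of $\varphi$.

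First I would dispatch \eqref{eq:cov1} as the paradigm case. Using the identities above, $\cov_\Sigma(\kappa/E, E) \geq 0$ is equivalent to $\langle \kappa/E \rangle_\Sigma \leq 2\pi/\Phi$, i.e., to $(\log L)'(\varphi) \geq -2\pi/\Phi$, which I would deduce from (a) convexity of $\log L(\varphi)$ together with (b) the asymptotic equality $(\log L)'(-\infty) = -2\pi/\Phi$; (b) is immediate from \eqref{eq:U_inf} since $L(\varphi) \sim L_{\partial\Omega}\, e^{-2\pi\varphi/\Phi}$ as the level sets tend to circles at infinity. For (a), differentiating twice under the integral and replacing $h_{UU}$ by $-h_{VV}$ gives
\begin{equation*}
L''(\varphi) = \int_0^\Phi \bigl(h_U^2 + h_{VV}\bigr) e^{-h}\, dV.
\end{equation*}
Integration by parts in $V$ (boundary terms vanish by $\Phi$-periodicity of $h$) rewrites the $h_{VV}$ summand as $\int_0^\Phi h_V^2 e^{-h}\, dV \geq 0$; Cauchy--Schwarz on $L'$ bounds $(L')^2 \leq L\int_0^\Phi h_U^2 e^{-h}\, dV$; combining,
\begin{equation*}
L L'' - (L')^2 \geq L\int_0^\Phi h_V^2\, e^{-h}\, dV \geq 0,
\end{equation*}
so $\log L$ is convex. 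Strictness collapses to $h_V \equiv 0$ on every level set, which forces $E$ (and hence $U$) to be radial, i.e., $\partial\Omega$ is a circle.

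The remaining five inequalities fit the same template with different ``entropies''. For \eqref{eq:Longinetti_cor} I would show that $\log(-A')(\varphi)$ is convex (with $-A' = \int_0^\Phi e^{-2h}\, dV$) and that its asymptotic slope at $-\infty$ equals $-4\pi/\Phi$; \eqref{eq:Longinetti_mono} reduces to monotonicity of a functional built from the support-function integral $\oint_{\Sigma_\varphi} \bm n \cdot \bm r\, ds$, whose $\varphi$-derivative equals $-2 A'(\varphi)$; \eqref{eq:cov2} and \eqref{eq:cov3} come from differentiating the Dirichlet-type quantity $\int_0^\Phi e^h\, dV = \oint_{\Sigma_\varphi} E^2\, ds$, with \eqref{eq:cov3} additionally invoking a Jensen-type refinement against the weighted measure $e^{-h}\, dV$; and the tangent-gradient relation \eqref{eq:grad_prod} is extracted from
\begin{equation*}
\int_0^\Phi h_{UV}\, h_V \, dV \;=\; \tfrac{1}{2}\,\frac{d}{d\varphi}\int_0^\Phi h_V^2 \, dV,
\end{equation*}
which transports the monotonicity of the residual term already encountered in the convexity proof of $\log L$ into the stated inner-product inequality. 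The same two-step recipe runs throughout: trade $h_{UU}$ for $-h_{VV}$ by integration by parts, then deploy Cauchy--Schwarz on the $h_U$-integrals; strictness everywhere traces back to the condition $h_V \equiv 0$ that characterizes the circular boundary.

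The main obstacle I anticipate is organizational rather than analytic: identifying, for each of the six inequalities, precisely which weighted $V$-integral has the desired first $\varphi$-derivative, and verifying that its asymptotic slope at $\varphi = -\infty$ matches the numerical prefactor (such as $2\pi/\Phi$ or $4\pi/\Phi$) that appears in the inequality. The asymptotic bookkeeping is uniformly clean: \eqref{eq:U_inf} supplies $L(\varphi) \sim L_{\partial\Omega}\, e^{-2\pi\varphi/\Phi}$ together with the analogous circular-limit expansions for $A(\varphi)$, $\langle \kappa \rangle_\Sigma$, and the other averages, calibrating all six constants in one stroke. The equality characterization (circular $\partial\Omega$) is then a uniform byproduct of the vanishing of the $\int h_V^2 e^{-\alpha h}\, dV$-type residuals.
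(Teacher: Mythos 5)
Your conformal framework is, up to notation, the same engine the paper runs: the $(U,V)$ chart is the paper's orthogonal curvilinear system $(\varphi,u)$ with $\sqrt{g}=1/E$, and the template ``convexity in $\varphi$ plus calibration of the asymptotic slope at $\varphi\to-\infty$'' is exactly the paper's strategy. Your paradigm case \eqref{eq:cov1} is correct and coincides with the paper's identity $\mathscr H'(\varphi)=\frac{2\pi}{\Phi}+\frac{\D}{\D\varphi}\log L_{\Sigma_\varphi}$ together with $\mathscr H''=(\log L_{\Sigma_\varphi})''\geq0$; your plan for \eqref{eq:Longinetti_cor} is precisely the paper's Longinetti functional $\mathscr L(\varphi)=\log\oint_{\Sigma_\varphi}\frac{\D s}{E}$. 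For \eqref{eq:grad_prod}, your identity $\int_0^\Phi h_{UV}h_V\,\D V=\tfrac12\frac{\D}{\D\varphi}\int_0^\Phi h_V^2\,\D V$ is the right reformulation of $-\mathscr F'(\varphi)$, but the quantity $\int_0^\Phi h_V^2\,\D V$ is \emph{not} the residual term from your $\log L$ computation (that one carries the weight $e^{-h}$); its monotonicity needs a separate convexity argument, which goes through because $h_U=\kappa/E$ is itself harmonic (Talenti's relation $\Delta\frac{\kappa}{E}=0$), plus a decay estimate for $\bm n\times\nabla\frac{\kappa}{E}$ at infinity.

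There are two genuine gaps. For \eqref{eq:Longinetti_mono}: since $\oint_{\Sigma_\varphi}\bm n\cdot\bm r\,\D s=2A_{\mathfrak D_\varphi}$ and $A_{\mathfrak D_\varphi}'(\varphi)=-\oint_{\Sigma_\varphi}\frac{\D s}{E}$, the inequality is \emph{equivalent} to $\frac{\D}{\D\varphi}\frac{4\pi A_{\mathfrak D_\varphi}}{L_{\Sigma_\varphi}^2}\leq0$, so ``reducing it to a monotonicity statement'' is circular unless that monotonicity is proved independently. The paper's proof is not a one-functional argument but a three-ingredient chain: Cauchy--Schwarz ($\langle 1/E\rangle_\Sigma\geq1/\langle E\rangle_\Sigma$), the already-established \eqref{eq:cov1} in the form $\langle\kappa/E\rangle_\Sigma\leq2\pi/\Phi$, and the isoperimetric inequality $L_\Sigma^2\geq4\pi A_{\mathfrak D}$ --- the last of which your proposal never invokes and which cannot be dispensed with. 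For \eqref{eq:cov3}: the mechanism is not a ``Jensen-type refinement'' but the exact conservation law $\oint_{\Sigma_\varphi}\frac{\kappa^2}{E}\D s-\oint_{\Sigma_\varphi}\frac{|\bm n\times\nabla\log E|^2}{E}\D s\equiv\frac{4\pi^2}{\Phi}$; in your coordinates this says $\int_0^\Phi(h_U^2-h_V^2)\,\D V$ is independent of $\varphi$ (immediate from $\frac{\D}{\D\varphi}\int_0^\Phi(h_U^2-h_V^2)\,\D V=-2\int_0^\Phi\partial_V(h_Uh_V)\,\D V=0$ using $h_{UU}=-h_{VV}$), and without it the $\varphi$-derivative of the covariance difference does not collapse to the manifestly non-negative $\frac{1}{\pi}\int_0^\Phi h_V^2\,\D V$. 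Relatedly, \eqref{eq:cov2} does not follow from differentiating $\oint_{\Sigma_\varphi}E^2\,\D s$ alone: bounding that derivative from below requires feeding \eqref{eq:cov1} back in (via $\mathscr H'\geq0$) together with $\oint_{\Sigma_\varphi}\kappa^2\,\D s\geq4\pi^2/L_{\Sigma_\varphi}$, so the six inequalities form a dependency chain rather than six independent instances of one template.
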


Some of these geometric inequalities (to be proved in \S\ref{sec:ent}) provide quantitative support for the aforementioned electricians' folklore,
and also explain the statistical trend in dendritic growth governed by Hele-Shaw flow or diffusion limited aggregation (see for example, Corollary \ref{cor:Hele-Shaw_mono}).

 Complementary to the 2-exD problems, we may also consider the Green's functions for
2-dimensional interior Dirichlet problems (``2-inD'' hereafter). For an unbounded domain $ \Omega\subset \mathbb R^2$ whose boundary $ \partial\Omega$ is a smooth Jordan curve enclosing the origin $ \mathbf 0$, the 2-inD problem can be formulated as\begin{align}
\left\{\begin{array}{ll}
\nabla^2 G(\bm r)=0, & \bm r\in\mathbb R^2\smallsetminus(\Omega\cup\partial\Omega\cup\{\mathbf 0\}),\\
G(\bm r)=\text{const.,} & \bm r\in \partial\Omega, \\\multicolumn{2}{l}{\displaystyle-\lim_{\varepsilon\to0^+}\oint_{|\bm r|=\varepsilon}\bm n\cdot\nabla G(\bm r)\D s=\Phi,}
\end{array}\right.\label{eq:2-inD_G}
\end{align}where $ \Phi\neq0$. (As before, we automatically have $ |\nabla G(\bm r)|\neq0$ for $\bm r\in\mathbb R^2\smallsetminus(\Omega\cup\{\mathbf 0\})$, by the Riemann mapping theorem.) If we set $ \Phi=1$ in the 2-inD problem, then
$ G(\bm r)=G_D^{\partial \Omega}(\mathbf 0,\bm r)$ is called a  Dirichlet Green's function in electrostatics \cite{Jackson:EM}. These 2-inD problems also occur in the setting  of internal diffusion limited aggregation \cite{MeakinDeutch1986,DiaconisFulton1991}, or the corresponding Hele-Shaw flow as its deterministic limit \cite{LevinePeres2010}.

We have the following interior analog of Theorem \ref{thm:2-exD}.

\begin{theorem}
[Sharp geometric inequalities for 2-inD]\label{thm:2-inD}We have the following inequalities that correlate
$ \kappa(\bm r)$ with $G(\bm r) $  on each level set $ \Sigma$ in 2-inD problems:{\allowdisplaybreaks\begin{align}
\cov_{\Sigma}\left(\frac{\kappa(\bm r)}{|\nabla G(\bm r)|},|\nabla G(\bm r)|\right)\leq{}&0,\tag{\ref{eq:cov1}$'$}\label{eq:cov1'}\\\cov_{\Sigma}\left( \frac{\kappa(\bm r)}{2\pi},\log \frac{|\nabla G(\bm r)|L_{\Sigma}}{\Phi}\right)\leq{}&\cov_{\Sigma}\left( \frac{|\nabla G(\bm r)|}{\Phi},\log \frac{|\nabla G(\bm r)|L_\Sigma}{\Phi}\right),\tag{\ref{eq:cov3}$'$}\label{eq:cov3'}\\\left\langle \left[\bm n\times\nabla\frac{\kappa(\bm r)}{|\nabla G(\bm r)|}\right]\cdot\left[\bm n\times\nabla\frac{1}{|\nabla G(\bm r)|} \right] \right\rangle_{\Sigma}\geq{}&0,\tag{\ref{eq:grad_prod}$'$}\label{eq:grad_prod'}\\\left\langle \frac{\kappa(\bm r)}{|\nabla G(\bm r)|^{2}} \right\rangle_\Sigma\geq{}&\frac{2\pi}{\Phi}\left\langle \frac{1}{|\nabla G(\bm r)|} \right\rangle_\Sigma,\tag{\ref{eq:Longinetti_cor}$'$}\label{eq:Longinetti_cor'}
\end{align}}where the inequalities are strict except when $ \partial \Omega$ is a circle centered at the origin $ \mathbf 0$.\footnote{Unfortunately, due to the reversed inequality in \eqref{eq:cov1'}, we are not yet able to produce an analog of  \eqref{eq:Longinetti_mono} or \eqref{eq:cov2}, for the 2-inD problems. }\end{theorem}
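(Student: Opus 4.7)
The strategy is to mirror the entropy-convexity arguments of \S\ref{sec:ent} used to prove Theorem~\ref{thm:2-exD}, executed now on the family of interior level sets. Parametrize $\Sigma_\varphi = \{G=\varphi\}$, $\varphi \in (c_0,+\infty)$ with $c_0 = G|_{\partial\Omega}$: these are smooth Jordan curves enclosing $\mathbf 0$, shrinking to the singularity as $\varphi\to+\infty$ and expanding to $\partial\Omega$ as $\varphi\downarrow c_0$. On each $\Sigma_\varphi$, the flux condition continues to give $\langle|\nabla G|\rangle_{\Sigma} = \Phi/L_{\Sigma}$ and Gauss--Bonnet gives $\langle\kappa\rangle_\Sigma = 2\pi/L_\Sigma$, so the same averaging identities that drive the proof of Theorem~\ref{thm:2-exD} remain available verbatim.

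\textbf{Entropy construction.} For each of \eqref{eq:cov1'}--\eqref{eq:Longinetti_cor'}, I would adopt the same entropy functional $S(\varphi)=\oint_{\Sigma_\varphi}\Psi(|\nabla G|,\kappa)\,\D s$ that proved its 2-exD counterpart, then compute $\dot S(\varphi)$ and $\ddot S(\varphi)$ by the coarea formula and integration by parts against $\nabla^2 G=0$. The integrand identities produced for $\dot S$ and $\ddot S$ are structurally identical to those in the 2-exD case; the only differences are that the $\varphi$-interval is now $(c_0,+\infty)$ rather than $(-\infty,c_0)$, and the ``boundary contribution'' in the monotonicity argument now sits at the interior source $\mathbf 0$ (where $|\nabla G|\sim\Phi/(2\pi|\bm r|)$ and $\kappa\sim 1/|\bm r|$ both blow up in a controlled manner) instead of at spatial infinity. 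The asymptotic radial profile of $G$ near $\mathbf 0$ yields a definite sign for this boundary term opposite to that obtained in the 2-exD limit $\varphi\to-\infty$, and this sign promotes convexity of $S(\varphi)$ to concavity (or vice versa), flipping each inequality.

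\textbf{Alternative reduction and main obstacle.} An equivalent route is the Kelvin inversion $\bm r\mapsto\bm r^*:=\bm r/|\bm r|^2$: since a 2D conformal map preserves harmonicity, one can pull $G$ back to a function on the exterior of the inverted Jordan curve $\partial\Omega^*$. After an affine adjustment (replacing $G$ by $2c_0-G$ so that the logarithmic singularity at $\mathbf 0$ converts into the correct $-\frac{\Phi}{2\pi}\log$ decay at infinity), this reduces the 2-inD problem to a 2-exD problem to which Theorem~\ref{thm:2-exD} applies; the conformal transformation laws $|\nabla U(\bm r^*)|=|\bm r|^2|\nabla G(\bm r)|$ together with a curvature transformation of schematic form $\kappa^* = |\bm r|^2\kappa - 2\,\bm n\cdot\bm r$ (plus the orientation flip of the outward normal under inversion) then pull each 2-exD inequality back to its 2-inD counterpart with reversed sign. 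The main technical obstacle in either approach is disciplined sign bookkeeping, particularly ensuring that the inhomogeneous curvature-shift terms picked up under the conformal pullback (equivalently, the singular boundary terms at $\mathbf 0$ in the entropy argument) cancel cleanly, so that only the orientation flip survives in the final covariances; the footnote disclaimer signals that precisely this cancellation fails for the analogs of \eqref{eq:Longinetti_mono} and \eqref{eq:cov2}. Equality in \eqref{eq:cov1'}--\eqref{eq:Longinetti_cor'} holds iff the entropy flow is stationary, which forces all level sets to be concentric circles and hence forces $\partial\Omega$ to be a circle centered at $\mathbf 0$.
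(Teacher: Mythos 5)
Your overall plan---reuse the functionals $\mathscr H,\mathscr E,\mathscr F,\mathscr L$ of \S\ref{sec:ent} on the interior level sets and let the asymptotics at the source $\mathbf 0$ replace the asymptotics at spatial infinity---is exactly the paper's strategy, and your identification of the relevant local behavior ($|\nabla G|\sim\Phi/(2\pi|\bm r|)$, $\kappa\sim1/|\bm r|$ near $\mathbf 0$) is correct. But the mechanism you give for the sign reversal is wrong, and it is the one step that actually carries the proof. You assert that the boundary contribution at $\mathbf 0$ has ``a definite sign opposite to that obtained in the 2-exD limit'' and that this ``promotes convexity of $S(\varphi)$ to concavity.'' Neither happens. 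The second-derivative identities are pointwise sums of squares and are insensitive to asymptotics, so $\mathscr H''(\varphi)\geq0$, $\mathscr F''(\varphi)\geq0$, $\mathscr L''(\varphi)\geq0$ and $\mathscr E'(\varphi)=0$ persist verbatim in 2-inD; moreover the limiting values of the first derivatives are the \emph{same} as in 2-exD ($\lim\mathscr H'=\lim\mathscr F'=0$ and $\lim\mathscr L'=-4\pi/\Phi$). What changes is only \emph{where} on the $\varphi$-axis these limits are attained: at $\varphi\to+\infty$ (the singularity) rather than $\varphi\to-\infty$ (spatial infinity). Since a convex function has a non-decreasing derivative, anchoring $\mathscr H'\to0$ at the right endpoint forces $\mathscr H'(\varphi)=-\int_\varphi^{+\infty}\mathscr H''(\phi)\,\D\phi\leq0$ for all finite $\varphi$, whereas in 2-exD the same limit at the left endpoint forced $\mathscr H'\geq0$. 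That single observation---convexity preserved, anchor point moved---is the entire content of the reversal, and your write-up replaces it with a claim (concavity) that is false and would not yield \eqref{eq:cov3'}, whose proof in the paper still rests on the derivative in \eqref{eq:k'-E'} being \emph{non-negative}.

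Your Kelvin-inversion alternative does not rescue the argument: you correctly flag that the curvature transforms inhomogeneously ($\kappa^*=|\bm r|^2\kappa-2\,\bm n\cdot\bm r$ schematically) and that the arc-length weight acquires a conformal factor, but you leave unproved precisely the cancellation that would be needed for the arc-length-weighted covariances on $\Sigma$ to pull back to those on $\Sigma^*$. As stated, that route is a conjecture, not a reduction. Your equality analysis reaches the right conclusion, but it inherits the gap above, since it is phrased in terms of stationarity of an entropy flow whose monotonicity direction you have not correctly established.
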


There is a notable reversal of
inequality signs [as compared to \eqref{eq:cov1}, \eqref{eq:cov3}, \eqref{eq:grad_prod} and \eqref{eq:Longinetti_cor} for 2-exD], whose physical significance will be sketched at the end of  \S\ref{sec:Green}.

\subsection{Plan of proof}
In \S\ref{sec:ent}, we study four functionals of $ \kappa(\bm r)$ and $ E(\bm r)$, which are  integrals over the level set $ \Sigma_\varphi:=\{\bm r\in\Omega\cup\partial\Omega|U(\bm r)=\varphi\}$:\begin{align}\mathscr
H(\varphi):={}&\oint_{\Sigma_\varphi}\frac{E(\bm r)}{\Phi}\log \frac{E(\bm r)L_{\Sigma_\varphi}}{\Phi}\D s,\label{eq:Hphi}\\\mathscr E(\varphi):={}&\oint_{\Sigma_\varphi}\frac{\kappa^2(\bm r)}{E(\bm r)}\D s-\oint_{\Sigma_\varphi}\frac{\left\vert\bm n\times\nabla\log E(\bm r)\right\vert^{2}}{E(\bm r)}\D s,\label{eq:Ephi}\\\mathscr F(\varphi):={}&\oint_{\Sigma_\varphi}\frac{\left\vert\bm n\times\nabla\log E(\bm r)\right\vert^{2}}{E(\bm r)}\D s,\label{eq:Wphi}\\\mathscr L(\varphi):={}&\log\oint_{\Sigma_\varphi}\frac{\D s}{E(\bm r)}.\label{eq:Lphi}
\end{align} We show that all these expressions are convex functions in $\varphi$, and their convexity eventually  entails \eqref{eq:cov1}--\eqref{eq:Longinetti_cor}, via  the  first-order derivatives $ \mathscr H'(\varphi)\geq0$, $ \mathscr E'(\varphi)=0$, $\mathscr F'(\varphi)\geq0 $ and $ \mathscr L'(\varphi)+\frac{4\pi}{\Phi}\geq0$. Such convexity arguments are morally similar to entropy monotonicity relations for level sets of Green's functions in manifolds of dimension greater than 2, as developed by Colding \cite{Colding2012} and Colding--Minicozzi \cite{ColdingMinicozzi2013,ColdingMinicozzi2014}.

In \S\ref{sec:Green}, we generalize our analysis to
2-inD problems. While all the functionals in  \eqref{eq:Hphi}--\eqref{eq:Lphi} remain convex in 2-inD,  their limit behavior is different from the 2-exD counterpart. This critical difference causes sign changes in the expressions $ \mathscr H'(\varphi)$, $\mathscr F'(\varphi) $ and $ \mathscr L'(\varphi)+\frac{4\pi}{\Phi}$, which account for the reverse inequalities in Theorem \ref{thm:2-inD}.

In \S\ref{sec:misc}, we show that our derivations are sensitive to space dimension:  there are some analytic and geometric difficulties   that one may encounter  while extending  the present results  to $ \mathbb R^d,d>2$.

\section{Entropy, conservation law and geometric inequalities\label{sec:ent}}
\subsection{Geometric entropy and curvature correlations}\label{subsec:ent_curv}The function $ \mathscr H(\varphi)$  defined in \eqref{eq:Hphi} is the relative entropy between two mutually non-singular probability measures: the flux density $ \D\mu=E(\bm r)\D s /\Phi$ and the line density $ \D\nu=\D s/L_{\Sigma_\varphi}$. By Jensen's inequality, we have \begin{align}
\mathscr H(\varphi)=-\oint_{\Sigma_\varphi}\log\frac{\D\nu}{\D\mu}\D\mu\geq-\log\oint_{\Sigma_\varphi}\frac{\D\nu}{\D\mu}\D\mu=0.
\label{eq:Hnonneg}\end{align}

Before evaluating the derivatives $ \mathscr H'(\varphi)$ and $ \mathscr H''(\varphi)$, we need some geometric preparations.

We assign local orthogonal curvilinear coordinates $\bm r (\varphi,u)$ to points $\bm r\in\Omega\cup\partial\Omega $, so that $ \varphi$ coincides with $ U(\bm r)$, and a pair of  points on different level sets share the same $u$ coordinate if and only if they are joined by an integral curve of $ \nabla U(\bm r) $ (known as ``electric field line'' in electrostatics). In such a  curvilinear coordinate system, we can rewrite the Euclidean metric $ \D \bm r\cdot\D \bm r=(\D x)^2+(\D y)^2$ as \begin{align}
\D \bm r\cdot\D \bm r=\frac{(\D\varphi)^2}{E^{2}}+g(\D u)^2, \quad\text{where }\frac{1}{E}:=\frac{1}{|\nabla U(\bm r)|}=\left\vert \frac{\partial \bm r}{\partial\varphi} \right\vert,g:=\left\vert \frac{\partial \bm r}{\partial u} \right\vert^2.
\end{align}Accordingly, the Laplacian in the Euclidean space $ \Delta\equiv\nabla^2=\frac{\partial^2}{\partial x^2}+\frac{\partial^2}{\partial y^2}$ can be decomposed into \cite[Proposition 1.2]{Zhou2013LevelSet}\begin{align}
\Delta=\Delta_{\Sigma_\varphi}+E^2\frac{\partial^2}{\partial \varphi^2}-\frac{1}{gE}\frac{\partial E}{\partial u}\frac{\partial}{\partial u}.\label{eq:Lap_Lap}
\end{align}Here, \begin{align}
\Delta_{\Sigma_\varphi}=\frac{1}{\sqrt{g}}\frac{\partial}{\partial u}\left( \frac{\sqrt{g}}{g} \frac{\partial}{\partial u}\right)=\frac{\partial^2}{\partial s^2}
\end{align}is the Laplacian on the level set $ \Sigma_\varphi$, whose arc length  parameter  $s$ satisfies $ (\D s)^2=g(\D u)^2$. It is then an elementary exercise in differential geometry to show that \cite[(5), (6) and Proposition 1.3]{Zhou2013LevelSet}\begin{align}
\frac{\partial (E\sqrt{g})}{\partial \varphi}=0,\quad \frac{\partial E}{\partial \varphi}=\kappa,\quad \frac{\partial \kappa}{\partial \varphi}=\frac{\kappa^{2}}{E}+\Delta_{\Sigma_\varphi}\frac{1}{E}.
\label{eq:phi_flow}\end{align}In what follows, we will also frequently need to differentiate $ \D s=\sqrt{g}\D u$ with respect to $\varphi$. Since  $u$ and $ \varphi$ are two independent variables in our  orthogonal curvilinear coordinate system, this boils down to computing the derivative  $ \frac{\partial \sqrt{g}
}{\partial\varphi}=-\frac{\kappa\sqrt{g}}{E}$  via either the standard normal variation formula of volume element in differential geometry, or the first two equalities in \eqref{eq:phi_flow}.
Such a technique also brings us \begin{align}
\frac{\D}{\D\varphi}L_{\Sigma_\varphi}=\frac{\D}{\D\varphi}\oint_{\Sigma_\varphi}\D s=\oint_{\Sigma_\varphi}\frac{\partial \sqrt{g}
}{\partial\varphi}\D u=-\oint_{\Sigma_\varphi}\frac{\kappa\D s
}{E},\label{eq:L'phi}
\end{align}agreeing with Laurence's coarea formula \cite[(77)]{Laurence1989}
up to sign conventions.\begin{proposition}[Convexity of $\mathscr H(\varphi)$]We have $\mathscr H''(\varphi)\geq0 $ in 2-exD, where the equality holds if and only if  $ E$ and $ \kappa$ both remain constant on $ \Sigma_\varphi$.
\end{proposition}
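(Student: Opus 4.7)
My approach is to exploit the flux conservation $\oint_{\Sigma_\varphi}E\,\D s = \Phi$ to split
\[
\mathscr H(\varphi) = \oint_{\Sigma_\varphi}\frac{E}{\Phi}\log\frac{E}{\Phi}\,\D s + \log L_{\Sigma_\varphi},
\]
and then to differentiate twice using the three identities in \eqref{eq:phi_flow} together with \eqref{eq:L'phi}, until $\mathscr H''(\varphi)$ collapses into a manifest sum of nonnegative quantities. A pleasant feature is that the first summand turns out to be linear in $\varphi$, so all the curvature of $\mathscr H$ is concentrated in $\log L_{\Sigma_\varphi}$.

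For $\mathscr H'(\varphi)$, I would use the $\varphi$-invariance of the flux measure $E\,\D s = E\sqrt g\,\D u$ (first relation of \eqref{eq:phi_flow}) to move $\D/\D\varphi$ under the integral; only $\log(E/\Phi)$ is then differentiated, and combined with $\partial_\varphi E = \kappa$ and Hopf's rotation-index formula $\oint_{\Sigma_\varphi}\kappa\,\D s = 2\pi$ (valid because every level set is a smooth Jordan curve and the stated sign convention yields rotation index $+1$), this gives $2\pi/\Phi$. Adding Laurence's coarea contribution \eqref{eq:L'phi} produces
\[
\mathscr H'(\varphi) = \frac{2\pi}{\Phi} - \frac{1}{L_{\Sigma_\varphi}}\oint_{\Sigma_\varphi}\frac{\kappa}{E}\,\D s.
\]
For $\mathscr H''(\varphi)$ the core step is to differentiate $\oint(\kappa\sqrt g/E)\,\D u$ using $\partial_\varphi E = \kappa$, $\partial_\varphi\kappa = \kappa^2/E + \Delta_{\Sigma_\varphi}(1/E)$, and $\partial_\varphi\sqrt g = -\kappa\sqrt g/E$ (the last being a consequence of the first relation of \eqref{eq:phi_flow}). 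After the $\kappa^2/E^2$ cross-terms cancel and integration by parts on the boundary-free curve $\Sigma_\varphi$ converts $\oint(1/E)\Delta_{\Sigma_\varphi}(1/E)\,\D s$ into $-\oint|\bm n\times\nabla(1/E)|^2\,\D s$, I expect to arrive at
\[
\mathscr H''(\varphi) = \left[\left\langle\frac{\kappa^2}{E^2}\right\rangle_{\Sigma_\varphi} - \left\langle\frac{\kappa}{E}\right\rangle_{\Sigma_\varphi}^{2}\right] + \left\langle\left|\bm n\times\nabla\frac{1}{E}\right|^2\right\rangle_{\Sigma_\varphi},
\]
the first bracket being the $\Sigma_\varphi$-variance of $\kappa/E$ and the second a nonnegative mean.

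The main obstacle I anticipate is the careful bookkeeping in this second-derivative calculation: three simultaneous $\varphi$-variations must be tracked, and a precise cancellation among the $\kappa^2/E^2$ contributions is what leaves the clean variance-plus-mean-square structure. The integration by parts is routine, since $\Sigma_\varphi$ has no boundary and $1/E$ is smooth there (the Riemann-mapping hypothesis keeps $E$ bounded away from $0$). For the equality case, $\mathscr H''(\varphi) = 0$ forces both the $\Sigma_\varphi$-variance of $\kappa/E$ and the pointwise tangential gradient of $1/E$ to vanish; the latter makes $E$ constant on $\Sigma_\varphi$, and then constancy of $\kappa/E$ forces $\kappa$ itself to be constant, matching the claimed rigidity.
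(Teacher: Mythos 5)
Your proposal is correct and follows essentially the same route as the paper: differentiate under the $\varphi$-invariant flux measure $E\sqrt{g}\,\D u$ using \eqref{eq:phi_flow}, \eqref{eq:L'phi} and the Umlaufsatz to get \eqref{eq:H'comp}, then integrate $\oint\frac{1}{E}\Delta_{\Sigma_\varphi}\frac{1}{E}\,\D s$ by parts to land on the identical variance-plus-mean-square expression for $\mathscr H''(\varphi)$, with the same rigidity analysis. Your upfront splitting of $\mathscr H$ into a $\varphi$-linear part plus $\log L_{\Sigma_\varphi}$ is exactly the observation the paper records immediately after its proof (the logarithmic convexity of $L_{\Sigma_\varphi}$), so it is a reorganization rather than a different argument.
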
\begin{proof}
Combining the geometric preparations above with the Umlaufsatz $ \oint_{\Sigma_\varphi}\kappa\D s=2\pi$, we can readily compute \begin{align}
\mathscr H'(\varphi)=\frac{1}{\Phi}\oint_{\Sigma_\varphi}E\frac{\partial}{\partial\varphi}\left(\log \frac{EL_{\Sigma_\varphi}}{\Phi}\right)\D s=\oint_{\Sigma_\varphi}\frac{\kappa\D s}{\Phi_{}}-\oint_{\Sigma_\varphi}\frac{\kappa\D s}{EL_{\Sigma_\varphi}}=\frac{2\pi}{\Phi}-\oint_{\Sigma_\varphi}\frac{\kappa\D s}{EL_{\Sigma_\varphi}},\label{eq:H'comp}
\end{align}and

\begin{align}\mathscr H''(\varphi)={}&-\oint_{\Sigma_\varphi}\frac{1}{E}\Delta_{\Sigma}\frac{1}{E}\frac{\D s}{L_{\Sigma_\varphi}}-\frac{1}{L_{\Sigma_\varphi}}\oint_{\Sigma_\varphi}\kappa\frac{\partial}{\partial\varphi}\left(\frac{1}{E}\right)\D s-\left(\oint_{\Sigma_\varphi}\frac{\kappa\D s}{E}\right)\frac{\partial}{\partial\varphi}\left(\frac{1}{L_{\Sigma_\varphi}}\right)\notag\\={}&\oint_{\Sigma_\varphi}\left\vert\bm n\times\nabla\frac{1}{E}\right\vert^{2}\frac{\D s}{L_{\Sigma_\varphi}}+\oint_{\Sigma_\varphi}\left(\frac{\kappa}{E}\right)^2\frac{\D s}{L_{\Sigma_\varphi}}-\left(\oint_{\Sigma_\varphi}\frac{\kappa\D s}{EL_{\Sigma_\varphi}}\right)^2\notag\\={}&\left\langle \left\vert\bm n\times\nabla\frac{1}{E}\right\vert^{2}+\left(\frac{\kappa}{E}-\left\langle\frac{\kappa}{E}\right\rangle_{{\Sigma_\varphi}}\right) ^{2}\right\rangle_{{\Sigma_\varphi}}\geq0.\end{align}Here, the term $ \left\vert\bm n\times\nabla\frac{1}{E}\right\vert^{2}\D s=\left(\frac{\partial }{\partial s}\frac{1}{E}\right)^2\D s$ arises from integrating the differential form $ -\frac{1}{E}\Delta_{\Sigma}\frac{1}{E}\D s=-\frac{1}{E}\frac{\partial^2}{\partial s^2}\frac{1}{E}\D s$ by parts.

This proves the convexity of our entropy function $ \mathscr H(\varphi)$. Furthermore, it is clear that we have $ \mathscr H''(\varphi)>0$ unless both $\frac1E $ and $ \frac{\kappa}{E}$  remain constant on $ \Sigma_\varphi$, which is equivalent to our claim in the proposition.\end{proof}

Since we can rewrite the computations above as \begin{align}
\mathscr H'(\varphi)=\frac{2\pi}{\Phi}+\frac{\D}{\D \varphi}\log L_{\Sigma_\varphi},\quad \mathscr H''(\varphi)=\frac{\D^{2}}{\D \varphi^{2}}\log L_{\Sigma_\varphi}\geq0,
\end{align}we also have a by-product stated in the following corollary. \begin{corollary}The circumference $L_{\Sigma_\varphi} $ of the equipotential curve $ \Sigma_\varphi$ is logarithmically convex in $\varphi$. \qed\end{corollary}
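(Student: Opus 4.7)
The plan is to read the corollary directly off the computations already carried out in the proof of the preceding proposition; no new integral identities are needed. The proposition produces an explicit formula for $\mathscr H'(\varphi)$ in \eqref{eq:H'comp}, and Laurence's coarea identity \eqref{eq:L'phi} produces an explicit formula for $\D L_{\Sigma_\varphi}/\D\varphi$. My first step will be to compare these two expressions and observe that
\begin{align*}
\frac{\D}{\D\varphi}\log L_{\Sigma_\varphi}=\frac{1}{L_{\Sigma_\varphi}}\frac{\D L_{\Sigma_\varphi}}{\D\varphi}=-\oint_{\Sigma_\varphi}\frac{\kappa\,\D s}{E\,L_{\Sigma_\varphi}},
\end{align*}
which coincides exactly with the second term on the right-hand side of \eqref{eq:H'comp}. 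Consequently $\mathscr H'(\varphi)=\frac{2\pi}{\Phi}+\frac{\D}{\D\varphi}\log L_{\Sigma_\varphi}$, the identity already flagged in the display just above the corollary.

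The second step is to differentiate this relation in $\varphi$. Since $\frac{2\pi}{\Phi}$ is a constant independent of the level $\varphi$, it drops out and we obtain $\mathscr H''(\varphi)=\frac{\D^{2}}{\D\varphi^{2}}\log L_{\Sigma_\varphi}$. Invoking the proposition then gives $\frac{\D^{2}}{\D\varphi^{2}}\log L_{\Sigma_\varphi}=\mathscr H''(\varphi)\geq0$, which is exactly the statement that $\varphi\mapsto L_{\Sigma_\varphi}$ is logarithmically convex. The rigidity half of the proposition further tells us that strict log-convexity holds at every $\varphi$ for which $\Sigma_\varphi$ is not a round circle along which $E$ and $\kappa$ are both constant.

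There is no real obstacle here; the only thing to be careful about is bookkeeping of signs in \eqref{eq:L'phi} and making sure the $1/L_{\Sigma_\varphi}$ factor is correctly absorbed into the weighted average, so that the subtracted term in $\mathscr H'(\varphi)$ truly matches $\frac{\D}{\D\varphi}\log L_{\Sigma_\varphi}$ rather than $\frac{\D}{\D\varphi}L_{\Sigma_\varphi}$. Once this identification is in place the corollary follows in a single line from $\mathscr H''(\varphi)\geq0$.
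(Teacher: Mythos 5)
Your proposal is correct and follows exactly the paper's own route: identifying the subtracted term in \eqref{eq:H'comp} with $\tfrac{\D}{\D\varphi}\log L_{\Sigma_\varphi}$ via \eqref{eq:L'phi}, so that $\mathscr H''(\varphi)=\tfrac{\D^{2}}{\D\varphi^{2}}\log L_{\Sigma_\varphi}\geq0$, which is precisely the one-line derivation the paper gives in the display preceding the corollary. Nothing is missing.
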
\begin{remark}This strengthens a previous result of Longinetti \cite[Theorem 3.1]{Longinetti1988}, which demonstrated logarithmic convexity of  $L_{\Sigma_\varphi} $   for convex ring configurations, without using an entropy argument.

We  note that Longinetti's aforementioned result has been generalized by Laurence \cite[Theorem 6]{Laurence1989} to star-convex ring configurations. The work of Laurence \cite{Laurence1989} also includes many higher order analogs of coarea formulae, which are applicable to harmonic functions in any spatial dimensions. I thank an anonymous reviewer for bringing Laurence's work to my attention.
\eor\end{remark}
\begin{proposition}[Sharp inequality for $ \mathscr H'(\varphi)$]In 2-exD, we have $ \mathscr H'(\varphi)\geq0$, which entails  \eqref{eq:cov1}: $\cov_{\Sigma_\varphi}\left( \frac{\kappa}{E},E \right)\geq0$. Both inequalities are strict unless $ \partial\Omega$ is a circle. \end{proposition}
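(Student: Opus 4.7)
The plan is to combine the convexity of $\mathscr H(\varphi)$ just established with an asymptotic analysis of the level sets at infinity, in order to pin down the sign of $\mathscr H'(\varphi)$, and then translate this differential inequality into the covariance inequality \eqref{eq:cov1} via elementary identities.

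First, since the preceding proposition gives $\mathscr H''(\varphi)\ge0$, the derivative $\mathscr H'(\varphi)$ is non-decreasing on the admissible range $\varphi\in(-\infty,\varphi_{\max}]$, where $\varphi_{\max}=U|_{\partial\Omega}$. Hence the desired bound $\mathscr H'(\varphi)\ge0$ follows at once from $\lim_{\varphi\to-\infty}\mathscr H'(\varphi)=0$. To verify this limit, I will use the asymptotic profile \eqref{eq:U_inf}: as $\varphi\to-\infty$, the level set $\Sigma_\varphi$ retreats to infinity along circles of radius $R_\varphi\sim\frac{L_{\partial\Omega}}{2\pi}e^{-2\pi\varphi/\Phi}$. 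A standard Laurent expansion of $U$ about infinity (equivalently, via the Riemann map sending $\Omega$ to the exterior of a disk) yields the uniform asymptotics $E(\bm r)=\frac{\Phi}{2\pi R_\varphi}(1+o(1))$ and $\kappa(\bm r)=\frac{1}{R_\varphi}(1+o(1))$ on $\Sigma_\varphi$, so that $\kappa/E\to 2\pi/\Phi$ uniformly. Recalling the formula \eqref{eq:H'comp}, this gives $\mathscr H'(\varphi)=\frac{2\pi}{\Phi}-\langle\kappa/E\rangle_{\Sigma_\varphi}\to 0$, as required.

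To pass from $\mathscr H'(\varphi)\ge0$ to \eqref{eq:cov1}, I use the flux condition \eqref{eq:2D_flux} together with Green's theorem, which imply $\oint_{\Sigma_\varphi}E\,\D s=\Phi$ on every level set, alongside the Umlaufsatz $\oint_{\Sigma_\varphi}\kappa\,\D s=2\pi$. These yield $\langle E\rangle_{\Sigma_\varphi}=\Phi/L_{\Sigma_\varphi}$ and $\langle\kappa\rangle_{\Sigma_\varphi}=2\pi/L_{\Sigma_\varphi}$, whence
\[
\cov_{\Sigma_\varphi}\!\left(\frac{\kappa}{E},E\right)=\langle\kappa\rangle_{\Sigma_\varphi}-\left\langle\frac{\kappa}{E}\right\rangle_{\Sigma_\varphi}\!\langle E\rangle_{\Sigma_\varphi}=\frac{\Phi}{L_{\Sigma_\varphi}}\,\mathscr H'(\varphi),
\]
so the two inequalities are equivalent. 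For the strictness statement, suppose $\mathscr H'(\varphi_0)=0$ at some $\varphi_0$; then monotonicity of $\mathscr H'$ combined with the limit $0$ at $-\infty$ forces $\mathscr H'\equiv 0$ on $(-\infty,\varphi_0]$, hence $\mathscr H''\equiv 0$ there. The equality clause of the previous proposition then renders $E$ and $\kappa$ simultaneously constant on each $\Sigma_\varphi$ with $\varphi\le\varphi_0$, so that every such level set is a round circle. Real-analyticity of $U$ and unique continuation upgrade this to $U(\bm r)=-\frac{\Phi}{2\pi}\log\frac{|\bm r-\bm c|}{R_0}+\text{const.}$ globally, which forces $\partial\Omega$ itself to be a circle.

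The principal difficulty lies in the asymptotic step: one needs \emph{uniform}, not merely pointwise, convergence $\kappa/E\to 2\pi/\Phi$ on $\Sigma_\varphi$ in order to justify passing to the limit under the arc-length average $\langle\cdot\rangle_{\Sigma_\varphi}$. This is where the 2-dimensional conformal structure enters essentially, and it is precisely this step that would fail in higher dimensions where no comparable exterior Riemann mapping is available.
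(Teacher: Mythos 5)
Your proof is correct and follows essentially the same route as the paper: integrate the convexity $\mathscr H''\ge 0$ from the limit $\lim_{\varphi\to-\infty}\mathscr H'(\varphi)=0$ (established via the asymptotics $\kappa\sim 1/|\bm r|$, $E\sim\Phi/(2\pi|\bm r|)$), identify $\cov_{\Sigma_\varphi}(\kappa/E,E)=\tfrac{\Phi}{L_{\Sigma_\varphi}}\mathscr H'(\varphi)$ using $\oint_{\Sigma_\varphi}E\,\D s=\Phi$ and the Umlaufsatz, and trace equality back to $\mathscr H''\equiv 0$ on all enclosing level sets. Your explicit attention to the uniformity of the asymptotics and the unique-continuation step in the rigidity clause are welcome refinements of details the paper leaves implicit.
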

\begin{proof}As $ |\bm r|\to +\infty$, we have $ \varphi\to-\infty$ according
to \eqref{eq:U_inf}, and \begin{align}
\kappa(\bm r)\sim \frac{1}{|\bm r|}, \quad E(\bm r)\sim\frac{\Phi}{2\pi|\bm r|},\label{eq:kappa_E_asympt}
\end{align}so \begin{align}
\lim_{\varphi\to-\infty}\mathscr H'(\varphi)=\frac{2\pi}{\Phi}-\frac{2\pi}{\Phi}\lim_{\varphi\to-\infty}\oint_{\Sigma_\varphi}\frac{\D s}{L_{\Sigma_\varphi}}=0,
\end{align}and \begin{align}
\mathscr H'(\varphi)=\int_{-\infty}^\varphi\mathscr H''(\phi)\D \phi\geq0.\label{eq:H'nonneg}
\end{align}The last inequality can be recast into\begin{align}
\frac{2\pi}{\Phi}-\oint_{\Sigma_\varphi}\frac{\kappa\D s}{EL_{\Sigma_\varphi}}=\frac{L_{\Sigma_\varphi}}{\Phi}\left(\oint_{\Sigma_\varphi}\frac{\kappa E\D s}{EL_{\Sigma_\varphi}}-\oint_{\Sigma_\varphi}\frac{\kappa\D s}{EL_{\Sigma_\varphi}}\oint_{\Sigma_\varphi}\frac{E\D s}{L_{\Sigma_\varphi}}\right)=\frac{L_{\Sigma_\varphi}}{\Phi}\cov_{\Sigma_\varphi}\left( \frac{\kappa}{E},E \right)\geq0,\label{eq:cov1pf}
\end{align} which proves our first geometric inequality stated in \eqref{eq:cov1}.
Such an inequality becomes an equality only if $ \mathscr H''(\phi)=0$ for all $\phi<\varphi$, that is, both $E$ and $\kappa$ remain constant  on every
level set enclosing
$ \Sigma_\varphi$---a scenario that   happens only when $ \partial \Omega$ is a circle.\end{proof}\begin{remark}Following Longinetti \cite{Longinetti1988}, we may also consider a harmonic function defined in an annular domain bounded by two smooth Jordan curves hereafter referred to as inner and outer rings (which are both equipotential curves), along with the Bernoulli boundary condition that $ E(\bm r)=|\nabla U(\bm r)|$ remains constant on the outer ring. By \eqref{eq:H'comp}, we have $ \mathscr H'(\varphi)=0$ on the outer ring. The proof above also brings us  $ \mathscr H'(\varphi)\geq0$ and $\cov_{\Sigma_\varphi}\left( \frac{\kappa}{E},E \right)\geq0$ in the annular domain. The inequalities are strict unless the inner and outer rings are concentric circles.

In fact, most of our results on 2-exD to be developed in \S\ref{subsec:ent_curv}  and \S\ref{subsec:stat_align} (with the only exception of Corollary \ref{cor:Hele-Shaw_mono}) extend naturally to annular domains with the Bernoulli boundary condition on the outer ring. Such extensions are also possible for the results from \S\ref{subsec:cons_law} and  \S\ref{subsec:Lfunc}, so long as some constant terms are adapted to the context of the outer ring. These extensions will generalize Longinetti's work \cite{Longinetti1988} on convex ring domains---annular domains whose inner and outer rings are both convex curves.       \eor\end{remark}
\begin{corollary}[Sharp inequalities related to isoperimetric deficit] For any equipotential curve $\Sigma$ in 2-exD, the geometric inequality     \eqref{eq:Longinetti_mono} $\left\langle \frac{1}{E(\bm r)} \right\rangle_{\Sigma}\geq\left\langle \frac{\kappa(\bm r)}{E(\bm r)} \right\rangle_{\Sigma}\left\langle\bm n\cdot\bm r\right\rangle_{\Sigma}$ holds. Let $A_{\mathfrak D_\varphi}:=\int_{\mathfrak D_{\varphi}}\D^2\bm r
$ be the area of the region $ \mathfrak D_{\varphi}$ bounded by the equipotential curve $ \Sigma_{\varphi}=\partial\mathfrak  D_{\varphi}$, then $ \frac{\D}{\D\varphi}\Big(1-\frac{4\pi A_{\mathfrak D_\varphi}}{L_{\partial\mathfrak D_\varphi}^2}\Big)\geq0$ and $\frac{\D}{\D\varphi}\big(L_{\partial\mathfrak D_\varphi}^{2}-4\pi A_{\mathfrak D_\varphi}^{\vphantom1}\big)\geq0 $ in 2-exD. All these three inequalities are strict unless $ \partial \Omega$ is circular.
\end{corollary}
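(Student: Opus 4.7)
The plan is to reduce each of the three claims to computations involving the first-order derivatives $\D L_{\Sigma_\varphi}/\D\varphi$ and $\D A_{\mathfrak D_\varphi}/\D\varphi$, and then invoke the preceding proposition together with Jensen's inequality and the classical isoperimetric inequality. First I would derive the area analog of \eqref{eq:L'phi}: since the Jacobian of $(\varphi,u)\mapsto\bm r(\varphi,u)$ equals $\sqrt{g}/E$, the Euclidean area element factors as $\D^2\bm r=(\D s/E)\D\varphi$, and since $\Sigma_\varphi$ moves inward as $\varphi$ increases, a coarea (or normal-variation) argument produces
\begin{align*}
\frac{\D A_{\mathfrak D_\varphi}}{\D\varphi}=-\oint_{\Sigma_\varphi}\frac{\D s}{E}.
\end{align*}
Combined with \eqref{eq:L'phi} and the divergence-theorem identity $\oint_{\Sigma}\bm n\cdot\bm r\,\D s=2A_{\mathfrak D}$, this recasts all three claims as arithmetic statements about the averages $\langle 1/E\rangle_{\Sigma}$, $\langle \kappa/E\rangle_{\Sigma}$, the circumference $L_{\Sigma}$, and the enclosed area $A_{\mathfrak D}$.

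For \eqref{eq:Longinetti_mono} the strategy is to chain three elementary ingredients: (a) the preceding proposition supplies $\mathscr H'(\varphi)\geq 0$, i.e.\ $\langle\kappa/E\rangle_{\Sigma}\leq 2\pi/\Phi$; (b) Jensen's inequality applied to the convex function $x\mapsto 1/x$, together with $\langle E\rangle_{\Sigma}=\Phi/L_{\Sigma}$ from conservation of flux, yields $\langle 1/E\rangle_{\Sigma}\geq L_{\Sigma}/\Phi$; (c) the classical isoperimetric inequality $L_{\Sigma}^2\geq 4\pi A_{\mathfrak D}$ rewrites $\langle\bm n\cdot\bm r\rangle_{\Sigma}=2A_{\mathfrak D}/L_{\Sigma}\leq L_{\Sigma}/(2\pi)$. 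Multiplying (a) by (c) and comparing to (b) gives $\langle\kappa/E\rangle_{\Sigma}\langle\bm n\cdot\bm r\rangle_{\Sigma}\leq(2\pi/\Phi)\cdot L_{\Sigma}/(2\pi)=L_{\Sigma}/\Phi\leq\langle 1/E\rangle_{\Sigma}$ (with the trivial case $\langle\kappa/E\rangle_{\Sigma}<0$ handled by sign). A direct calculation using the Umlaufsatz $\oint\kappa\,\D s=2\pi$ then rewrites
\begin{align*}
\frac{\D}{\D\varphi}\bigl(L_{\Sigma}^2-4\pi A_{\mathfrak D}\bigr)=-2L_{\Sigma}^2\,\cov_{\Sigma}(\kappa,1/E),
\end{align*}
which is $\geq 0$ because $\langle\kappa\rangle_{\Sigma}\langle 1/E\rangle_{\Sigma}\geq(2\pi/L_{\Sigma})(L_{\Sigma}/\Phi)=2\pi/\Phi\geq\langle\kappa/E\rangle_{\Sigma}$ by (a)--(b); while the quotient rule expresses $\frac{\D}{\D\varphi}(1-4\pi A_{\mathfrak D}/L_{\Sigma}^2)$ as a positive multiple of $\langle 1/E\rangle_{\Sigma}-\langle\bm n\cdot\bm r\rangle_{\Sigma}\langle\kappa/E\rangle_{\Sigma}$, so the normalized-deficit inequality is just \eqref{eq:Longinetti_mono} in disguise.

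The main subtlety is the strictness clause: one must trace back through each ingredient. Jensen's inequality degenerates iff $E$ is constant on $\Sigma$, whereas the equality $\langle\kappa/E\rangle_{\Sigma}=2\pi/\Phi$ amounts to $\mathscr H'(\varphi)=0$, which by $\mathscr H'(-\infty)=0$ and $\mathscr H''\geq 0$ forces $\mathscr H''(\psi)\equiv 0$ for every $\psi\leq\varphi$; consequently every level set enclosing $\Sigma_\varphi$ has both $E$ and $\kappa$ constant, hence is a round circle, which occurs only when $\partial\Omega$ itself is a circle, in agreement with the strictness clauses of the preceding proposition.
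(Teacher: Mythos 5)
Your proposal is correct and follows essentially the same route as the paper: the coarea formula $\D A_{\mathfrak D_\varphi}/\D\varphi=-\oint_{\Sigma_\varphi}\D s/E$ together with $\oint_{\Sigma}\bm n\cdot\bm r\,\D s=2A_{\mathfrak D}$, then the chain ``$\mathscr H'(\varphi)\geq0$ (equivalently $\cov_{\Sigma}(\kappa/E,E)\geq0$) $+$ Jensen/Cauchy--Schwarz $+$ isoperimetric inequality'' for \eqref{eq:Longinetti_mono}, the quotient-rule identification of $\frac{\D}{\D\varphi}\big(1-\frac{4\pi A_{\mathfrak D_\varphi}}{L^2_{\partial\mathfrak D_\varphi}}\big)$ with a positive multiple of the \eqref{eq:Longinetti_mono} difference, and the same two estimates for $\frac{\D}{\D\varphi}\big(L^2_{\partial\mathfrak D_\varphi}-4\pi A_{\mathfrak D_\varphi}\big)$ (which you phrase, equivalently, as $-2L_{\Sigma}^{2}\cov_{\Sigma}(\kappa,1/E)\geq0$). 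The strictness discussion via $\mathscr H''\equiv0$ on all enclosing level sets matches the paper's as well.
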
\begin{proof}
We note that $ \frac12\oint_{\Sigma_{\varphi}=\partial\mathfrak D_\varphi}\bm n\cdot\bm r\D  s=\int_{\mathfrak D_{\varphi}}\D^2\bm r=A_{\mathfrak D_\varphi}$. To prove    \eqref{eq:Longinetti_mono} for $ \Sigma=\partial \mathfrak  D$, we compute\begin{align}
L_{\Sigma}\left\langle \frac{1}{E} \right\rangle_{\Sigma}-2\left\langle \frac{\kappa}{E} \right\rangle_{\Sigma}A_{\mathfrak  D}\geq\frac{L_{\Sigma}}{\langle E\rangle_\Sigma}-\frac{2\langle \kappa\rangle_\Sigma}{\langle E\rangle_\Sigma}A_{\mathfrak  D}=\frac{L_{\Sigma}^{2}-4\pi A_{\mathfrak  D}^{\vphantom1}}{\Phi}\geq0.
\end{align} Here, in the first step, we have exploited the Cauchy--Schwarz inequality and the correlation inequality in \eqref{eq:cov1}. In the last step, we have used the isoperimetric inequality.

In view of a special case \cite[\S4]{Talenti1983} of Federer's coarea formula \cite[\S3.2]{Federer1969}\begin{align}
-\frac{\D }{\D\varphi}A_{\mathfrak D_\varphi}=\oint_{\Sigma_{\varphi}}\frac{\D s}{E},\label{eq:coarea}
\end{align}we can rearrange  \eqref{eq:Longinetti_mono}  into
\begin{align}
\frac{4\pi A_{\mathfrak D_\varphi}}{L_{\partial\mathfrak D_\varphi}^2}\left( 2 \left\langle \frac{\kappa}{E} \right\rangle_{\Sigma_\varphi}-\frac{L_{\Sigma}}{A_{\mathfrak  D_{\varphi}}}\left\langle \frac{1}{E} \right\rangle_{\Sigma_{\varphi}}\right)=\frac{\D}{\D\varphi}\frac{4\pi A_{\mathfrak D_\varphi}}{L_{\partial\mathfrak D_\varphi}^2}\leq0.\label{eq:isop_defc_deriv}
\end{align}Therefore,   the left-hand side of the rescaled isoperimetric inequality $ 1-\frac{4\pi A_{\mathfrak D_\varphi}}{L_{\partial\mathfrak D_\varphi}^2}\geq0$ decays monotonically to zero, as $ |\bm r|\to+\infty$, $ \varphi\to-\infty$.

Meanwhile, one can also compute\begin{align}\begin{split}&\frac{\D}{\D\varphi}
\left(L_{\partial\mathfrak D_\varphi}^{2}-4\pi A_{\mathfrak D_\varphi}^{\vphantom1}\right)=2L_{\partial\mathfrak D_\varphi}\frac{\D L_{\partial\mathfrak D_\varphi}}{\D\varphi}-4\pi\frac{\D A_{\mathfrak D_\varphi}
}{\D\varphi}\\={}&-2L_{\partial\mathfrak D_\varphi}\oint_{\Sigma_\varphi}\frac{\kappa\D s
}{E}+4\pi\oint_{\Sigma_{\varphi}}\frac{\D s}{E}\geq-\frac{4\pi L_{\Sigma_{\varphi}}^2}{\Phi}+4\pi\oint_{\Sigma_{\varphi}}\frac{\D s}{E}\\={}&\frac{4\pi L_{\Sigma_{\varphi}}}{\langle E\rangle_{\Sigma_{\varphi}}}\left(\langle E\rangle_{\Sigma_{\varphi}}\left\langle \frac{1}{E} \right\rangle _{\Sigma_\varphi}-1 \right)\geq0,\end{split}
\end{align}where the penultimate inequality comes from \eqref{eq:cov1pf}, and the last step involves the Cauchy--Schwarz inequality. This generalizes Longinetti's proof for the monotonicity of isoperimetric deficit $L_{\partial\mathfrak D_\varphi}^{2}-4\pi A_{\mathfrak D_\varphi} $ in the setting of  convex ring domains with the Bernoulli boundary condition (constant $ E(\bm r)=|\nabla U(\bm r)|$) on the outer ring \cite[Theorem 5.2]{Longinetti1988}.

In other words, as one tracks down the electrostatic potential $\varphi$, the equipotential curve $ \Sigma_{\varphi}$ becomes rounder and rounder \big(according to the decay of    either   $1-\frac{4\pi A_{\mathfrak D_\varphi}}{L_{\partial\mathfrak D_\varphi}^2}$ or $L_{\partial\mathfrak D_\varphi}^{2}-4\pi A_{\mathfrak D_\varphi} $\big). This trend is strictly monotone, unless both $E$ and $\kappa$ remain constant  on every
level set, that is, unless  $ \partial \Omega$ is a circle.
\end{proof}

\begin{proposition}[Non-negative correlation between curvature and field intensity]In 2-exD, we have $ \cov_{\Sigma_\varphi}\left(\kappa, E\right)\geq0$ according to \eqref{eq:cov2}. The inequality is strict unless $\partial \Omega $ is circular.\end{proposition}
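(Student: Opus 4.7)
My plan is to reduce the statement to the non-negativity of the scalar functional
\[
\mathcal Q(\varphi):=\oint_{\Sigma_\varphi}\kappa E\,\D s-\frac{2\pi\Phi}{L_{\Sigma_\varphi}},
\]
so that the Umlaufsatz $\oint_{\Sigma_\varphi}\kappa\,\D s=2\pi$ together with the flux prescription $\oint_{\Sigma_\varphi}E\,\D s=\Phi$ converts the covariance into $\cov_{\Sigma_\varphi}(\kappa,E)=\mathcal Q(\varphi)/L_{\Sigma_\varphi}$. I then need to show both $\mathcal Q(-\infty)=0$ and $\mathcal Q'(\varphi)\geq 0$.

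Differentiating under the integral sign using the flow equations in \eqref{eq:phi_flow} and integrating the $\Delta_{\Sigma_\varphi}(1/E)$ contribution by parts on the closed curve (mirroring the computation already carried out for $\mathscr H''(\varphi)$), I obtain
\[
\frac{\D}{\D\varphi}\oint_{\Sigma_\varphi}\kappa E\,\D s=\oint_{\Sigma_\varphi}\kappa^2\,\D s+\oint_{\Sigma_\varphi}\left\vert\bm n\times\nabla\log E\right\vert^{2}\D s.
\]
Combining with the coarea identity \eqref{eq:L'phi} for $\D L_{\Sigma_\varphi}/\D\varphi$ yields
\[
\mathcal Q'(\varphi)=\oint_{\Sigma_\varphi}\kappa^{2}\,\D s+\oint_{\Sigma_\varphi}\left\vert\bm n\times\nabla\log E\right\vert^{2}\D s-\frac{2\pi\Phi}{L_{\Sigma_\varphi}^{2}}\oint_{\Sigma_\varphi}\frac{\kappa}{E}\,\D s.
\]

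The heart of the argument, and the step I expect to carry most of the weight, is a clean cancellation in this last formula. The Cauchy--Schwarz inequality applied to the Umlaufsatz gives $\oint_{\Sigma_\varphi}\kappa^{2}\,\D s\geq 4\pi^{2}/L_{\Sigma_\varphi}$, while the already-established \eqref{eq:cov1}, read off from \eqref{eq:cov1pf}, supplies $\oint_{\Sigma_\varphi}\kappa/E\,\D s\leq 2\pi L_{\Sigma_\varphi}/\Phi$. Substituting these two sharp bounds, the positive $4\pi^{2}/L_{\Sigma_\varphi}$ contribution from the first term exactly cancels the negative $4\pi^{2}/L_{\Sigma_\varphi}$ contribution from the third, leaving
\[
\mathcal Q'(\varphi)\geq \oint_{\Sigma_\varphi}\left\vert\bm n\times\nabla\log E\right\vert^{2}\D s\geq 0.
\]
The matching of constants here depends critically on the signs coming out of \eqref{eq:L'phi}, and is what makes the plan succeed.

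For the boundary condition, the asymptotics \eqref{eq:U_inf}--\eqref{eq:kappa_E_asympt} together with $L_{\Sigma_\varphi}\sim 2\pi|\bm r|$ imply that both $\oint_{\Sigma_\varphi}\kappa E\,\D s$ and $2\pi\Phi/L_{\Sigma_\varphi}$ agree with $\Phi/|\bm r|$ at leading order as $\varphi\to-\infty$, hence $\mathcal Q(-\infty)=0$. Integrating $\mathcal Q'\geq 0$ then yields $\mathcal Q(\varphi)\geq 0$, i.e., \eqref{eq:cov2}. Equality at some $\varphi$ forces $\mathcal Q'\equiv 0$ on $(-\infty,\varphi]$, hence $\partial_{s}E\equiv 0$ on every level set enclosing $\Sigma_\varphi$; the harmonic function $U$ would then be radial on this open exterior region and, by analytic continuation throughout $\Omega$, radial everywhere, making $\partial\Omega$ a circle.
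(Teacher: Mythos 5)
Your argument is correct and is essentially the paper's own proof: both differentiate $L_{\Sigma_\varphi}\cov_{\Sigma_\varphi}(\kappa,E)=\oint_{\Sigma_\varphi}\kappa E\,\D s-2\pi\Phi/L_{\Sigma_\varphi}$, convert the $E\Delta_{\Sigma_\varphi}(1/E)$ term into $\oint|\bm n\times\nabla\log E|^2\,\D s$ by parts, and then invoke exactly the two bounds you name (the variance/Cauchy--Schwarz bound $\oint\kappa^2\,\D s\geq 4\pi^2/L_{\Sigma_\varphi}$ and $\mathscr H'(\varphi)\geq0$, i.e.\ \eqref{eq:cov1}) to cancel the $4\pi^2/L_{\Sigma_\varphi}$ contributions, finishing with the same vanishing limit as $\varphi\to-\infty$. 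The only cosmetic difference is in the equality case, where the paper records that both $\kappa$ and $E$ must be constant on every enclosing level set, whereas you deduce only the constancy of $E$ and then appeal to the (true, but separately justified via \eqref{eq:kE_fluct}) fact that constant $E$ on a level set forces it to be a circle.
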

\begin{proof}
To prove  \eqref{eq:cov2}, we differentiate\begin{align}
L_{\Sigma_\varphi}\cov_{\Sigma_\varphi}\left(\kappa, E\right)=\oint_{\Sigma_\varphi}\kappa E\D s-\frac{2\pi\Phi}{L_{\Sigma_\varphi}}
\end{align}  as follows:\begin{align}\frac{\D}{\D\varphi}\left[ L_{\Sigma_\varphi}\cov_{\Sigma_\varphi}\left( \kappa, E\right) \right]={}&\oint_{\Sigma_\varphi}E\Delta_{\Sigma_{\varphi}}\frac{1}{E}\D s+\oint_{\Sigma_\varphi}\kappa^{2}\D s-\frac{2\pi\Phi}{L^{2}_{\Sigma_\varphi}}\oint_{\Sigma_\varphi}\frac{\kappa\D s}{E}\notag\\={}&\oint_{\Sigma_\varphi}\left\vert\bm n\times\nabla\log E\right\vert^2\D s+\oint_{\Sigma_\varphi}\kappa^{2}\D s+\frac{2\pi\Phi}{L_{\Sigma_\varphi}}\left[ \mathscr  H'(\varphi)- \frac{2\pi}{\Phi}\right]\notag\\\geq{}& L_{\Sigma_\varphi}\left\langle \big\vert\bm n\times\nabla\log E\big\vert^{2}+\big(\kappa-\left\langle\kappa\right\rangle_{{\Sigma_\varphi}}\big) ^{2}\right\rangle_{{\Sigma_\varphi}}\geq0,\label{eq:d_cov}\end{align}and note that $\lim_{\varphi\to-\infty} L_{\Sigma_\varphi}\cov_{\Sigma_\varphi}\left( \kappa, E\right)=0$. Here, we have exploited \eqref{eq:H'nonneg} in the penultimate step of \eqref{eq:d_cov}. Again, we have a strict  inequality $ \cov_{\Sigma_\varphi}\left(\kappa, E\right)>0$ unless $ \kappa$ and $E$ remain constant on each level set enclosing $ \Sigma_\varphi$, which only occurs when $ \partial \Omega$ is a circle.\end{proof}

The  inequality \eqref{eq:cov2} has a simple application in the Hele-Shaw flow \cite{Howison1986,Saffman1986}, where the time-dependent boundary surface $\partial \Omega_t $ evolves according to $ \frac{\partial \bm r}{\partial t}=v(\bm r,t)\bm n$ for $ \bm r\in\partial \Omega_t$. Here, we have $ v(\bm r,t)=|\nabla U(\bm r,t)|$, where $U(\bm r,t)$ solves 2-exD in the unbounded region $ \Omega_t$ that evolves in time. This moving boundary behavior is different from the electrostatic setting, where $ \frac{\partial \bm r}{\partial\varphi}=-\frac{\bm n}{|\nabla U(\bm r)|}$.
In lieu of \eqref{eq:L'phi}, we have the following normal variation of arc length:\begin{align}
\frac{\D }{\D t}L_{\partial\Omega_t}=\oint_{\partial\Omega_t}\kappa(\bm r,t)v(\bm r,t)\D s.
\end{align}In parallel to the coarea formula \eqref{eq:coarea}, we  have \begin{align}
\frac{\D }{\D t}A_{\mathbb R^2\smallsetminus\Omega_t}=\oint_{\partial\Omega_t}v(\bm r,t)\D s.
\end{align}\begin{corollary}[Monotonicity of isoperimetric deficit under Hele-Shaw flow]\label{cor:Hele-Shaw_mono}Under the Hele-Shaw flow, the isoperimetric deficit  $
L^2_{\partial\Omega_t}-4\pi A_{\mathbb R^2\smallsetminus\Omega_t}$ is non-decreasing with respect to  time $t$. \end{corollary}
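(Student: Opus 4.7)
The plan is to differentiate $L_{\partial\Omega_t}^{2}-4\pi A_{\mathbb R^{2}\smallsetminus\Omega_t}$ directly using the two normal-variation formulas displayed immediately before the corollary, and then massage the resulting expression into a covariance so that inequality \eqref{eq:cov2} can be applied. The obstacle, if there is one, is purely bookkeeping: recognising that the two topological/flux identities $\oint_{\partial\Omega_t}\kappa\D s=2\pi$ (Umlaufsatz) and $\oint_{\partial\Omega_t}v\D s=\oint_{\partial\Omega_t}|\nabla U|\D s=\Phi$ (conservation of flux across the moving boundary) furnish exactly the averages $\langle\kappa\rangle_{\partial\Omega_t}=2\pi/L_{\partial\Omega_t}$ and $\langle v\rangle_{\partial\Omega_t}=\Phi/L_{\partial\Omega_t}$ needed to rewrite the time derivative as a covariance.

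Concretely, writing $L=L_{\partial\Omega_t}$ and $A=A_{\mathbb R^{2}\smallsetminus\Omega_t}$, I would compute
\begin{align*}
\frac{\D}{\D t}\left(L^{2}-4\pi A\right)
&=2L\oint_{\partial\Omega_t}\kappa v\D s-4\pi\oint_{\partial\Omega_t}v\D s\\
&=2L^{2}\langle\kappa v\rangle_{\partial\Omega_t}-4\pi L\langle v\rangle_{\partial\Omega_t}\\
&=2L^{2}\Bigl[\langle\kappa v\rangle_{\partial\Omega_t}-\langle\kappa\rangle_{\partial\Omega_t}\langle v\rangle_{\partial\Omega_t}\Bigr]
=2L^{2}\cov_{\partial\Omega_t}\!\left(\kappa,E\right),
\end{align*}
where in the third equality I use $\langle\kappa\rangle_{\partial\Omega_t}=2\pi/L$ from the Umlaufsatz, and $v=E=|\nabla U|$ on $\partial\Omega_t$. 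Inequality \eqref{eq:cov2} applied to the equipotential curve $\Sigma=\partial\Omega_t$ (which, being the level set on which the Dirichlet datum is prescribed, is itself a level set of $U$) then gives $\cov_{\partial\Omega_t}(\kappa,E)\geq 0$, and hence the time derivative is non-negative.

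It is worth noting that the monotonicity is strict unless $\partial\Omega_t$ is a circle, by the equality statement in \eqref{eq:cov2}; this is consistent with the fact that a round inclusion expanding under the Hele-Shaw flow preserves its round shape and hence its (zero) isoperimetric deficit. The only conceptual subtlety is the change of perspective between \S\ref{subsec:ent_curv}, where the equipotential curves are indexed by $\varphi$ with the domain $\Omega$ fixed, and the Hele-Shaw setting, where the boundary itself moves and $\varphi$ is effectively fixed at its boundary value; the covariance inequality \eqref{eq:cov2} is purely a statement about the equipotential curve $\partial\Omega_t$ seen as a level set of the instantaneous harmonic function $U(\bm r,t)$, so it transfers verbatim.
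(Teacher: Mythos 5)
Your proposal is correct and follows essentially the same route as the paper: differentiate $L^2_{\partial\Omega_t}-4\pi A_{\mathbb R^2\smallsetminus\Omega_t}$ via the two normal-variation formulas, use the Umlaufsatz to recognise the result as $2L^2_{\partial\Omega_t}\cov_{\partial\Omega_t}(\kappa,v)$, and invoke \eqref{eq:cov2} on $\partial\Omega_t$ viewed as a level set of the instantaneous harmonic function. The remarks on strictness and on the change of perspective from $\varphi$-indexed level sets to the moving boundary are accurate and consistent with the paper.
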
\begin{proof}
Direct computation reveals that \begin{align}
\frac{\D}{\D t}(L^2_{\partial\Omega_t}-4\pi A_{\mathbb R^2\smallsetminus\Omega_t})=2L_{\partial\Omega_t}^2\cov_{\partial\Omega_t}(\kappa(\bm r,t),v(\bm r,t))\geq0.
\end{align}Therefore, when time elapses, the Hele-Shaw flow drives the boundary away from  roundness, as measured by the monotonically non-decreasing quantity $ L^2_{\partial\Omega_t}-4\pi A_{\mathbb R^2\smallsetminus\Omega_t}$.\end{proof}

 \subsection{Geometric conservation law and correlation comparison\label{subsec:cons_law}}Our next goal is to show that  the quantity  $ \mathscr E(\varphi)$
defined in \eqref{eq:Ephi} is in fact a conservation law:
\begin{align}
\mathscr E(\varphi):={}&\oint_{\Sigma_\varphi}\frac{\kappa^2\D s}{E}-\oint_{\Sigma_\varphi}\frac{\left\vert\bm n\times\nabla\log E\right\vert^{2}\D s}{E}\equiv\frac{4\pi^{2}}{\Phi}.
\end{align}In other words, there is an exact identity with respect to the probability measure $ \D\mu=E\D s/\Phi$:\begin{align}\oint_{\Sigma_\varphi}\left(\frac{\kappa}{E}-\oint_{\Sigma_\varphi}\frac{\kappa}{E}\D \mu\right)^{2}\D \mu=\oint_{\Sigma_\varphi}\left\vert \bm n\times\nabla\frac{1}{E} \right\vert^{2}\D \mu.\label{eq:kE_fluct}\end{align}From this identity, it is also clear that  $ E(\bm r)\equiv\text{const.},\bm r\in\Sigma_\varphi$ implies that  $ \Sigma_\varphi$ is a circle.

\begin{proposition}[A geometric conservation law]We have $  \mathscr E'(\varphi)=0$ and  $ \mathscr E(\varphi)\equiv\frac{4\pi^{2}}{\Phi}$ in 2-exD.\end{proposition}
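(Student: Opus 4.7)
My plan is to establish $\mathscr{E}(\varphi) \equiv 4\pi^{2}/\Phi$ in two stages: first show $\mathscr{E}'(\varphi) \equiv 0$, and then pin down the constant using the asymptotics as $\varphi \to -\infty$.

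For the derivative, I would treat $\mathscr{E}(\varphi) = \oint_{\Sigma_\varphi}\kappa^{2}/E\,\D s - \mathscr{F}(\varphi)$ piece by piece and show that the two pieces have identical $\varphi$-derivatives. Differentiating $\oint(\kappa^{2}/E)\sqrt{g}\,\D u$ and applying the three kinematic identities in \eqref{eq:phi_flow}, the two cubic-in-$\kappa$ contributions (one from $\partial_\varphi\kappa$, one from $\partial_\varphi\sqrt{g}$) cancel against each other, leaving
\begin{align*}
\frac{\D}{\D\varphi}\oint_{\Sigma_\varphi}\frac{\kappa^{2}}{E}\D s = \oint_{\Sigma_\varphi}\frac{2\kappa}{E}\Delta_{\Sigma_\varphi}\frac{1}{E}\D s.
\end{align*}
For $\mathscr{F}'(\varphi)$, I would exploit the conservation law $\partial_\varphi(E\sqrt{g}) = 0$ to rewrite the integrand in $u$-coordinates as $(\partial_u\log E)^{2}/(E\sqrt{g})\,\D u$, so that only $(\partial_u\log E)^{2}$ carries $\varphi$-dependence, with $\partial_\varphi\log E = \kappa/E$. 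A single tangential integration by parts on the closed curve, combined with the elementary identity $(\partial_s\log E)/E = -\partial_s(1/E)$, recasts the resulting expression into $\oint_{\Sigma_\varphi}(2\kappa/E)\Delta_{\Sigma_\varphi}(1/E)\,\D s$, which matches the previous derivative. Hence $\mathscr{E}'(\varphi) \equiv 0$.

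To fix the constant, I would send $\varphi \to -\infty$. By \eqref{eq:kappa_E_asympt}, $\kappa/E \to 2\pi/\Phi$ uniformly on the nearly circular level sets, so that the Umlaufsatz gives
\begin{align*}
\oint_{\Sigma_\varphi}\frac{\kappa^{2}}{E}\D s = \oint_{\Sigma_\varphi}\frac{\kappa}{E}\cdot\kappa\,\D s \longrightarrow \frac{2\pi}{\Phi}\cdot 2\pi = \frac{4\pi^{2}}{\Phi};
\end{align*}
meanwhile $\log E$ becomes asymptotically constant on each circle, forcing $\mathscr{F}(\varphi) \to 0$. These two limits together identify $\mathscr{E}(\varphi) \equiv 4\pi^{2}/\Phi$.

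The main obstacle will be the matching of the two derivatives in the first step, which is not a priori obvious: one derivative naturally produces a Laplacian of $1/E$ via differential-geometric identities, while the other naturally produces a tangential-gradient product, and the reconciliation relies on precisely one integration by parts together with the identity $(\partial_s\log E)/E = -\partial_s(1/E)$. As a conceptual sanity check, one may note that in isothermal coordinates $\zeta = U + iV$ (with $W = U + iV$ the holomorphic potential), the integrand of $\mathscr{E}(\varphi)$ is the real part of $-(W^{-1})''(\zeta)^{2}/(W^{-1})'(\zeta)^{2}$, which is holomorphic and $V$-periodic; independence of $\varphi$ then follows instantly from Cauchy's theorem on the resulting cylinder, and the limit value $4\pi^{2}/\Phi$ emerges from the leading asymptotic $z(\zeta)\sim C e^{-2\pi\zeta/\Phi}$. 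I would however present the direct real-variable computation in keeping with the entropy-monotonicity spirit of Section~\ref{sec:ent}.
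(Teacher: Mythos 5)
Your proposal is correct and follows essentially the same route as the paper: both establish $\mathscr E'(\varphi)=0$ by reducing the $\varphi$-derivatives of the two constituent integrals to the common expression $\oint_{\Sigma_\varphi}\frac{2\kappa}{E}\Delta_{\Sigma_\varphi}\frac{1}{E}\,\D s$ via \eqref{eq:phi_flow}, the invariance $\partial_\varphi(E\sqrt{g})=0$ and a single tangential integration by parts, and both then fix the constant from the $\varphi\to-\infty$ asymptotics \eqref{eq:kappa_E_asympt}. The one place where the paper is more careful is the limit $\mathscr F(\varphi)\to 0$, which it justifies with the quantitative multipole estimate $\left\vert\bm n\times\nabla\log E\right\vert=O(|\bm r|^{-2})$ of \eqref{eq:dlogE_est} rather than mere asymptotic constancy of $\log E$ (needed because the measure $\D s/E$ on $\Sigma_\varphi$ has total mass growing like $|\bm r|^{2}$); also note the stray minus sign in your holomorphic aside, since the integrand there should be $+\R\left[\left(z''(\zeta)/z'(\zeta)\right)^{2}\right]\D\psi$, consistent with the positive limit $4\pi^{2}/\Phi$.
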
\begin{proof}
We first enlist the help from \eqref{eq:phi_flow} to compute the derivative $  \mathscr E'(\varphi)$ as follows:
\begin{align}&\frac{\D}{\D\varphi}\left[\oint_{\Sigma_\varphi}\frac{\kappa^2\D s}{E}-\oint_{\Sigma_\varphi}\frac{\left\vert\bm n\times\nabla\log E\right\vert^{2}\D s}{E}\right]\notag\\={}&\oint_{\Sigma_\varphi}\frac{2\kappa}{E}\left( \Delta_{\Sigma_{\varphi}}\frac1E +\frac{\kappa^2}{E}\right)\D s-\oint_{\Sigma_\varphi}\frac{\kappa^{2}}{E}\frac{2\kappa}{E}\D s-2\oint_{\Sigma_\varphi}\frac{1}{gE}\frac{\partial\log E}{\partial u}{\frac{\partial(\kappa/E)}{\partial u}\D u}\notag\\={}&\oint_{\Sigma_\varphi}\frac{2\kappa}{E}\frac{\partial^{2}}{\partial s^{2}}\frac1E\D s+\oint_{\Sigma_\varphi}\frac{\partial(1/ E)}{\partial s}{\frac{\partial(2\kappa/E)}{\partial s}\D s=\oint_{\Sigma_\varphi}\frac{\partial}{\partial s}\left(\frac{2\kappa}{E}\frac{\partial}{\partial s}\frac1E\right)\D s=0}.\label{eq:E'}\end{align}Then, we  equate $ \mathscr E(\varphi)$ with $ \lim_{\varphi\to-\infty}\mathscr E(\varphi)$.

To evaluate the last limit, we need two observations. First, the asymptotic behavior in \eqref{eq:kappa_E_asympt} immediately leads us to $\lim_{\varphi\to-\infty}\oint_{\Sigma_\varphi}\frac{\kappa^2\D s}{E}=\frac{4\pi^{2}}{\Phi}$. Second, we have the following multipole expansion for a harmonic function\footnote{The expression $\log|\nabla U(x\bm e_x+ y\bm e_y)|=\R\log f'(x+iy)$ is the real part of a complex-analytic function, hence harmonic.  Here, one can construct  $ f(x+iy)= U(x\bm e_x+ y\bm e_y)+iV(x\bm e_x+ y\bm e_y)$ from $U$ and its conjugate harmonic function $V$. } \begin{align}
\log E(|\bm r|(\bm e_x\cos\theta+\bm e_y\sin\theta))=c_0-\log|\bm r|+\frac{c_1\cos\theta+s_1\sin\theta}{|\bm r|}+O\left( \frac{1}{|\bm r|^2} \right)
\end{align} for constants $c_0,c_1,s_1$, which enables us to estimate\begin{align}
\left\vert\bm n\times\nabla\log E(\bm r)\right|=O\left( \frac{1}{|\bm r|^2} \right),\quad \text{as }|\bm r|\to+\infty,\label{eq:dlogE_est}
\end{align} so $ \lim_{\varphi\to-\infty}\oint_{\Sigma_\varphi}\frac{\left\vert\bm n\times\nabla\log E\right\vert^{2}\D s}{E}=0$.

Summarizing what we have done in the last paragraph, we see that $ \mathscr E(\varphi)\equiv\frac{4\pi^{2}}{\Phi}$ holds.\end{proof}
\begin{corollary}[Sharp correlation comparison inequality]\label{cor:corr_comp}In 2-exD, we have $ \cov_{\Sigma_{\varphi}}\left( \frac{\kappa}{2\pi},\log \frac{EL_{\Sigma_{\varphi}}}{\Phi}\right)\geq\cov_{\Sigma_{\varphi}}\left( \frac{E}{\Phi},\log \frac{EL_{\Sigma_{\varphi}}}{\Phi}\right)$
according to  \eqref{eq:cov3}. The inequality is strict unless $ \partial\Omega$ is circular.
\end{corollary}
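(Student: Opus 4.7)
The plan is to reduce the inequality to a single scalar quantity and then establish the reduction's sign by combining monotonicity in $\varphi$ with the asymptotic behavior at infinity, mirroring the pattern used throughout \S\ref{sec:ent}.

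First, expanding the covariance difference via bilinearity, the identities $\oint\kappa\,\D s = 2\pi$ and $\oint E\,\D s = \Phi$, and the constancy of $\log(L_{\Sigma_\varphi}/\Phi)$ on $\Sigma_\varphi$ yields
\begin{align*}
\cov_{\Sigma_\varphi}\!\left(\tfrac{\kappa}{2\pi}, \log\tfrac{EL_{\Sigma_\varphi}}{\Phi}\right) - \cov_{\Sigma_\varphi}\!\left(\tfrac{E}{\Phi}, \log\tfrac{EL_{\Sigma_\varphi}}{\Phi}\right) = \frac{J(\varphi)}{2\pi\Phi L_{\Sigma_\varphi}},
\end{align*}
where $J(\varphi) := \oint_{\Sigma_\varphi}\bigl(\Phi\kappa - 2\pi E\bigr)\log E\,\D s$, so it suffices to show $J(\varphi)\geq 0$.

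The core step is to compute $J'(\varphi)$ using the flow equations \eqref{eq:phi_flow} together with $\partial_\varphi\sqrt{g} = -\kappa\sqrt{g}/E$. A direct calculation gives $\tfrac{\D}{\D\varphi}\oint E\log E\,\D s = 2\pi$, while $\tfrac{\D}{\D\varphi}\oint \kappa\log E\,\D s$ reduces, after the $\kappa^2\log E/E$ terms cancel, to $\oint\kappa^2/E\,\D s + \oint\bigl(\Delta_{\Sigma_\varphi}(1/E)\bigr)\log E\,\D s$. One integration by parts on the closed curve $\Sigma_\varphi$, combined with the chain rule $\partial_s(1/E) = -\partial_s\log E/E$, converts the latter integral into $\oint|\bm n\times\nabla\log E|^2/E\,\D s$. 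Substituting the conservation law $\mathscr E(\varphi)\equiv 4\pi^2/\Phi$ for $\oint\kappa^2/E\,\D s$ then cancels the $4\pi^2/\Phi$ contribution against $\frac{2\pi}{\Phi}\cdot 2\pi$, leaving the clean identity
$$J'(\varphi) = 2\Phi\oint_{\Sigma_\varphi}\frac{|\bm n\times\nabla\log E|^2}{E}\,\D s \geq 0.$$

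For the vanishing at $-\infty$, the asymptotics \eqref{eq:kappa_E_asympt} show that $\Sigma_\varphi$ becomes a large circle on which $\log E\sim \log(\Phi/(2\pi|\bm r|))$ is nearly constant; both $\Phi\oint\kappa\log E\,\D s$ and $2\pi\oint E\log E\,\D s$ approach $2\pi\Phi\log(\Phi/(2\pi|\bm r|))$, so the divergent $\log|\bm r|$ contributions cancel exactly and the $O(|\bm r|^{-1})$ dipole corrections integrate to zero around $S^1$, giving $J(\varphi)\to 0$. Combining this with $J'\geq 0$ yields $J(\varphi)\geq 0$ and hence the corollary; equality forces $\bm n\times\nabla\log E\equiv 0$ on every level set enclosed by $\Sigma_\varphi$, which by the remark following \eqref{eq:kE_fluct} means each such level set (and so $\partial\Omega$) is a circle. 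The main obstacle is the integration-by-parts step that turns $\oint(\Delta_{\Sigma_\varphi}(1/E))\log E\,\D s$ into the nonnegative quadratic form $\oint|\bm n\times\nabla\log E|^2/E\,\D s$; once this identity is in hand, the conservation law $\mathscr E\equiv 4\pi^2/\Phi$ cleanly absorbs the remaining $4\pi^2/\Phi$ term and the rest is bookkeeping.
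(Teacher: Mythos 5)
Your proof is correct and follows essentially the same route as the paper: you rewrite the covariance difference as a single integral (your $J(\varphi)$ is $2\pi\Phi$ times the paper's quantity in \eqref{eq:k-E}), compute its $\varphi$-derivative to be $2\Phi\oint_{\Sigma_\varphi}|\bm n\times\nabla\log E|^{2}E^{-1}\D s\geq 0$ by the same integration by parts and the conservation law $\mathscr E(\varphi)\equiv 4\pi^{2}/\Phi$, and conclude from the vanishing limit as $\varphi\to-\infty$. One small wording slip in the equality case: since $J(\varphi)=\int_{-\infty}^{\varphi}J'(\phi)\,\D\phi$, equality forces $E$ to be constant on the level sets $\Sigma_\phi$ with $\phi\leq\varphi$, which are the curves \emph{enclosing} $\Sigma_\varphi$ (lying farther out), not those enclosed by it.
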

\begin{proof}We note that the derivative of \begin{align}&L_{\Sigma_\varphi}\left[
\cov_{\Sigma_{\varphi}}\left( \frac{\kappa}{2\pi},\log \frac{EL_{\Sigma_{\varphi}}}{\Phi}\right)-\cov_{\Sigma_{\varphi}}\left( \frac{E}{\Phi},\log \frac{EL_{\Sigma_{\varphi}}}{\Phi}\right)\right]\notag\\={}&\oint_{\Sigma_\varphi}\kappa\log \frac{EL_{\Sigma_\varphi}}{\Phi}\frac{\D s}{2\pi}-\oint_{\Sigma_\varphi}E\log \frac{EL_{\Sigma_\varphi}}{\Phi}\frac{\D s}{\Phi} ,\label{eq:k-E}
\end{align} evaluates to\begin{align}&
\frac{1}{2\pi}\oint_{\Sigma_\varphi}\left(\Delta_{\Sigma}\frac{1}{E}\right)\log \frac{EL_{\Sigma_\varphi}}{\Phi}\D s+\frac{1}{2\pi}\oint_{\Sigma_\varphi}\frac{\kappa^2\D s}{E}-\frac{1}{\Phi}\oint_{\Sigma_\varphi}\kappa\D s\notag\\={}&\frac{1}{2\pi}\left(\oint_{\Sigma_\varphi}\frac{\kappa^2\D s}{E}+ \oint_{\Sigma_\varphi}\frac{|\bm n\times\nabla\log E|^2\D s}{E}-\frac{4\pi^{2}}{\Phi}\right)=\frac{1}{\pi}\oint_{\Sigma_\varphi}\frac{|\bm n\times\nabla\log E|^2\D s}{E}\geq0,\label{eq:k'-E'}
\end{align}where the conservation law  \eqref{eq:Ephi} has entered the penultimate step. Now that the expression in \eqref{eq:k-E} vanishes as $ \varphi\to-\infty$, by virtue of  \eqref{eq:kappa_E_asympt}, we have the correlation comparison inequality stated in \eqref{eq:cov3}.

We note that  \eqref{eq:cov3} becomes an equality only when there is constant field intensity on each equipotential curve: $E(\bm r)\equiv\text{const.},\bm r\in\Sigma_\varphi$. By \eqref{eq:kE_fluct}, we know that this is equivalent to the requirement that
$\partial \Omega$ be a circle.\end{proof}

\subsection{Statistical (mis)alignment of tangential gradients\label{subsec:stat_align}}We say that the tangential gradients of two scalar functions $ f_1(\bm r),\bm r\in\Sigma_\varphi$ and $ f_2(\bm r),\bm r\in\Sigma_\varphi$ are aligned (resp.~misaligned) at a point $\bm r$
if \begin{align}
[\bm n\times\nabla f_1(\bm r)]\cdot[\bm n\times\nabla f_2(\bm r)]\geq0\quad(\text{resp. }[\bm n\times\nabla f_1(\bm r)]\cdot[\bm n\times\nabla f_2(\bm r)]\leq0).
\end{align}We say that there are statistical alignment (resp.~misalignment) of tangential gradients if \begin{align}
\oint_{\Sigma_\varphi}[\bm n\times\nabla f_1(\bm r)]\cdot[\bm n\times\nabla f_2(\bm r)]\D s\geq0\quad\left(\text{resp. }\oint_{\Sigma_\varphi}[\bm n\times\nabla f_1(\bm r)]\cdot[\bm n\times\nabla f_2(\bm r)]\D s\leq0\right).
\end{align}Statistical (mis)alignment  of tangential gradients tells us how  the monotonicity of two functions, on average, are tied to each other.

Before moving onto the convexity proof for \begin{align}
\mathscr F(\varphi):={}&\oint_{\Sigma_\varphi}\frac{\left\vert\bm n\times\nabla\log E\right\vert^{2}\D s}{E}=\oint_{\Sigma_\varphi}\frac{\kappa^2\D s}{E}-\frac{4\pi^{2}}{\Phi},
\end{align}we need a convenient formula for second-order derivatives in $\varphi$.
\begin{lemma}For any suitably regular $f(\bm r)$ defined in a neighborhood of $ \Sigma_\varphi$, we have \begin{align}
\frac{\D^2}{\D \varphi^2}\oint_{\Sigma_\varphi}E(\bm r)f(\bm r)\D s=\oint_{\Sigma_\varphi}\frac{1}{E}\Delta f\D s.\label{eq:Delta''}
\end{align}\end{lemma}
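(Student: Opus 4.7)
The plan is to reduce everything to changes of variables that make the $\varphi$-dependence transparent, then match the Laplacian decomposition \eqref{eq:Lap_Lap} against an integration by parts on the closed curve $\Sigma_\varphi$.

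First I would pull the integral back to the $u$-coordinate, writing
\begin{align*}
\oint_{\Sigma_\varphi} E(\bm r)f(\bm r)\D s=\oint (E\sqrt{g})\,f\,\D u.
\end{align*}
Since $u$ is $\varphi$-independent by construction of the orthogonal coordinates and since $\partial(E\sqrt{g})/\partial\varphi=0$ by the first identity in \eqref{eq:phi_flow}, differentiation under the integral sign gives
\begin{align*}
\frac{\D}{\D\varphi}\oint_{\Sigma_\varphi}\!Ef\,\D s=\oint(E\sqrt{g})\,\frac{\partial f}{\partial\varphi}\,\D u=\oint_{\Sigma_\varphi}\!E\,\frac{\partial f}{\partial\varphi}\,\D s,
\end{align*}
and iterating this trick once more yields $\frac{\D^{2}}{\D\varphi^{2}}\oint Ef\,\D s=\oint E\,\partial_\varphi^{2}f\,\D s$. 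This is the easy half: the weight $E\,\D s$ is a $\varphi$-invariant measure on the family of level sets, so differentiating commutes past it.

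Next I would invoke the Laplacian decomposition \eqref{eq:Lap_Lap} to solve for $E\,\partial_\varphi^{2}f$ and get
\begin{align*}
E\,\partial_\varphi^{2}f=\frac{1}{E}\Delta f-\frac{1}{E}\Delta_{\Sigma_\varphi}f+\frac{1}{gE^{2}}\frac{\partial E}{\partial u}\frac{\partial f}{\partial u}.
\end{align*}
Inserting this in the previous formula, the target identity \eqref{eq:Delta''} reduces to the single closed-curve claim
\begin{align*}
\oint_{\Sigma_\varphi}\frac{1}{E}\Delta_{\Sigma_\varphi}f\,\D s=\oint_{\Sigma_\varphi}\frac{1}{gE^{2}}\frac{\partial E}{\partial u}\frac{\partial f}{\partial u}\,\D s.
\end{align*}

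The core step is to verify this tangential identity. Writing $\Delta_{\Sigma_\varphi}f=\partial^{2}f/\partial s^{2}$ and using integration by parts along the closed Jordan curve $\Sigma_\varphi$ (where no boundary terms survive) gives
\begin{align*}
\oint_{\Sigma_\varphi}\!\frac{1}{E}\frac{\partial^{2}f}{\partial s^{2}}\D s=-\oint_{\Sigma_\varphi}\!\frac{\partial(1/E)}{\partial s}\frac{\partial f}{\partial s}\D s=\oint_{\Sigma_\varphi}\!\frac{1}{E^{2}}\frac{\partial E}{\partial s}\frac{\partial f}{\partial s}\D s,
\end{align*}
and then converting $\partial/\partial s=g^{-1/2}\partial/\partial u$ together with $\D s=\sqrt{g}\,\D u$ produces precisely the right-hand side. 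The only thing that could go wrong is a missing boundary term; the Jordan property of $\Sigma_\varphi$ (guaranteed by the Riemann mapping theorem, as recalled in the introduction) rules this out, so the identity holds as stated. The main bookkeeping hurdle is tracking the $\sqrt{g}$ factors when translating between the $u$- and arc-length parameterizations, but this is purely mechanical.
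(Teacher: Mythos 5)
Your proof is correct and follows essentially the same route as the paper's: both reduce $\frac{\D^2}{\D\varphi^2}\oint_{\Sigma_\varphi}Ef\,\D s$ to $\oint_{\Sigma_\varphi}E\,\partial_\varphi^2 f\,\D s$ via the $\varphi$-invariance of $E\sqrt{g}$, substitute the Laplacian decomposition \eqref{eq:Lap_Lap}, and kill the tangential remainder $-\oint\frac{1}{E}\Delta_{\Sigma}f\,\D s-\oint\frac{1}{g}\frac{\partial(1/E)}{\partial u}\frac{\partial f}{\partial u}\,\D s$ by a single integration by parts on the closed curve. The only difference is that you spell out the differentiation-under-the-integral step that the paper leaves implicit.
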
\begin{proof}
Exploiting the decomposition of Laplacian in \eqref{eq:Lap_Lap}, we  can show that\begin{align} \frac{\D^2}{\D \varphi^2}\oint_{\Sigma_\varphi}E(\bm r)f(\bm r)\D s=\oint _{\Sigma_\varphi}\frac1E(\Delta f-\Delta _{\Sigma }f)\D s-\oint _{\Sigma_\varphi}\frac{1}{g}\frac{\partial (1/E)}{\partial u}\frac{\partial f }{\partial u}\D s.\end{align}Integrating by parts, we may further deduce\begin{align}-
\oint _{\Sigma_\varphi}\frac1E\Delta _{\Sigma }f\D s-\oint _{\Sigma_\varphi}\frac{1}{g}\frac{\partial (1/E)}{\partial u}\frac{\partial f }{\partial u}\D s=-
\oint _{\Sigma_\varphi}\frac1E\frac{\partial ^{2}f}{\partial s^{2}}\D s-\oint _{\Sigma_\varphi}\frac{\partial (1/E)}{\partial s}\frac{\partial f }{\partial s}\D s=0,
\end{align}hence our claim.\end{proof}
\begin{proposition}[Convexity of $ \mathscr F(\varphi)$]We have $\mathscr F''(\varphi)\geq0 $ in 2-exD, where the equality holds if and only if  $ \nabla\frac{\kappa}{E}=\mathbf 0$  on $ \Sigma_\varphi$.\end{proposition}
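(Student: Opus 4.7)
My plan is to reduce $\mathscr F''(\varphi)$ to an explicit non-negative integral, specifically
\[
\mathscr F''(\varphi)\;=\;2\oint_{\Sigma_\varphi}\frac{|\nabla(\kappa/E)|^2}{E}\,\D s,
\]
from which both $\mathscr F''(\varphi)\geq 0$ and the stated equality condition are immediate.

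I would first invoke the just-proved conservation law $\mathscr E(\varphi)\equiv 4\pi^2/\Phi$ to rewrite $\mathscr F(\varphi)+4\pi^2/\Phi=\oint_{\Sigma_\varphi}\kappa^2/E\,\D s=\oint_{\Sigma_\varphi}E(\kappa/E)^2\,\D s$.  Setting $w:=\kappa/E$ and applying Lemma \eqref{eq:Delta''} with $f=w^2$, the pointwise identity $\Delta(w^2)=2|\nabla w|^2+2w\Delta w$ yields
\[
\mathscr F''(\varphi)\;=\;2\oint_{\Sigma_\varphi}\frac{|\nabla w|^2}{E}\,\D s\;+\;2\oint_{\Sigma_\varphi}\frac{w\Delta w}{E}\,\D s,
\]
so the crux becomes the vanishing identity $\oint_{\Sigma_\varphi}w\Delta w/E\,\D s=0$.

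To prove this, I would decompose $\Delta$ via \eqref{eq:Lap_Lap}.  After integration by parts on the closed curve $\Sigma_\varphi$, the tangential-Laplacian contribution $\oint w\Delta_{\Sigma_\varphi}w/E\,\D s$ and the first-order contribution $-\oint w(\partial_u E)(\partial_u w)/(gE^2)\,\D s$ combine to produce $-\oint(\partial_s w)^2/E\,\D s$, with the cross-terms involving $\partial_s\log E$ cancelling exactly.  For the remaining piece $\oint Ew\partial_\varphi^2 w\,\D s$, I would use the evolution relation $E\partial_\varphi w=\Delta_{\Sigma_\varphi}(1/E)$ from \eqref{eq:phi_flow}, together with the coordinate commutator $[\partial_\varphi,\partial_s^2]=2w\partial_s^2+(\partial_s w)\partial_s$ (which follows from $\partial_\varphi\sqrt g=-w\sqrt g$), to derive the clean identity $E\partial_\varphi^2 w=-(\partial_s^2 w)/E-(\partial_s w)\partial_s(1/E)$.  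A final integration by parts then gives $\oint Ew\partial_\varphi^2 w\,\D s=\oint(\partial_s w)^2/E\,\D s$, which cancels the piece just produced.

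With $\mathscr F''(\varphi)=2\oint_{\Sigma_\varphi}|\nabla w|^2/E\,\D s$ in hand, the orthogonal splitting $|\nabla w|^2=|\bm n\times\nabla w|^2+|\bm n\cdot\nabla w|^2$ makes the integrand a non-negative sum of squares, vanishing identically on $\Sigma_\varphi$ if and only if both tangential and normal derivatives of $\kappa/E$ vanish everywhere on $\Sigma_\varphi$, i.e.\ $\nabla(\kappa/E)=\mathbf 0$ as claimed.  The main obstacle is the careful bookkeeping for the integration-by-parts cancellations that reduce $\oint w\Delta w/E\,\D s$ to zero; an equivalent route that avoids the full decomposition of $\Delta$ is to start from $\mathscr F'(\varphi)=2\oint_{\Sigma_\varphi}(\kappa/E)\Delta_{\Sigma_\varphi}(1/E)\,\D s$ (obtained by differentiating $\oint\kappa^2/E\,\D s$ directly via $\partial_\varphi\sqrt g=-w\sqrt g$ and \eqref{eq:phi_flow}) and apply one further $\D/\D\varphi$, invoking the same commutator manipulation to arrive at the same closed form for $\mathscr F''(\varphi)$.
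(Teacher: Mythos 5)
Your proposal is correct, and its skeleton coincides with the paper's: both apply the lemma \eqref{eq:Delta''} to $f=(\kappa/E)^2$ and expand $\Delta(w^2)=2|\nabla w|^2+2w\Delta w$ with $w=\kappa/E$. The one genuine difference is how the cross term $\oint_{\Sigma_\varphi}w\Delta w\,E^{-1}\D s$ is disposed of: the paper kills it pointwise in one stroke by citing Talenti's relation $\Delta(\kappa/E)=0$, whereas you prove its vanishing from scratch by decomposing $\Delta$ via \eqref{eq:Lap_Lap}, using $E\partial_\varphi w=\Delta_{\Sigma_\varphi}(1/E)$ and the commutator $[\partial_\varphi,\partial_s^2]=2w\partial_s^2+(\partial_s w)\partial_s$, and integrating by parts. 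I checked your intermediate identity $E\partial_\varphi^2 w=-E^{-1}\partial_s^2 w-(\partial_s w)\partial_s(1/E)$ and the resulting cancellation of the two copies of $\oint(\partial_s w)^2E^{-1}\D s$; they are right, and in fact your computation amounts to a re-derivation of Talenti's pointwise identity itself (substituting your formula for $E^2\partial_\varphi^2 w$ back into \eqref{eq:Lap_Lap} gives $\Delta w=0$ everywhere, not merely the integral statement). So your route buys self-containedness at the cost of the bookkeeping you yourself flag as the main obstacle, while the paper's buys brevity at the cost of an external citation; the final closed form $\mathscr F''(\varphi)=2\oint_{\Sigma_\varphi}|\nabla(\kappa/E)|^2E^{-1}\D s$ and the equality criterion $\nabla(\kappa/E)=\mathbf 0$ on $\Sigma_\varphi$ agree in both.
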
\begin{proof}
By virtue of \eqref{eq:Delta''} and Talenti's relation $ \Delta\frac{\kappa}{E}=0$ \cite{Talenti1983}, we can quickly compute\begin{align}\begin{split}\mathscr F''(\varphi)={}&
\frac{\D^2}{\D\varphi^2}\oint_{\Sigma_\varphi}\frac{\kappa^2\D s}{E}=\oint_{\Sigma_\varphi}\frac{1}{E}\Delta\frac{\kappa^2}{E^{2}}\D s\\={}&2\oint_{\Sigma_\varphi}\left\vert\nabla\frac{\kappa}{E}\right\vert^2\D s+2\oint_{\Sigma_\varphi}\frac{1}{E}\frac{\kappa}{E}\Delta\frac{\kappa}{E}\D s=2\oint_{\Sigma_\varphi}\frac{1}{E}\left\vert\nabla\frac{\kappa}{E}\right\vert^2\D s\geq0,\end{split}\label{eq:F''}
\end{align}as stated. \end{proof}\begin{proposition}[Sharp inequality for $ \mathscr F'(\varphi)$]In 2-exD, we have $ \mathscr F'(\varphi)\geq0$, which entails  \eqref{eq:grad_prod}: $\left\langle \left[\bm n\times\nabla\frac{\kappa(\bm r)}{E(\bm r)}\right]\cdot\left[\bm n\times\nabla\frac{1}{E(\bm r)} \right] \right\rangle_{\Sigma}\leq0$. Both inequalities are strict unless $ \partial\Omega$ is a circle. \end{proposition}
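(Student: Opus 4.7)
The plan is to mirror the earlier argument for $\mathscr H'(\varphi)$: having already proved the convexity $\mathscr F''(\varphi)\geq 0$, I only need to supply (i) an explicit formula for $\mathscr F'(\varphi)$ that identifies it, up to a negative factor, with the left-hand side of \eqref{eq:grad_prod}, and (ii) the vanishing limit $\lim_{\varphi\to-\infty}\mathscr F'(\varphi)=0$. Then $\mathscr F'(\varphi)=\int_{-\infty}^{\varphi}\mathscr F''(\phi)\D\phi\geq 0$ delivers the proposition.

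To obtain the formula, I would differentiate $\mathscr F(\varphi)=\oint_{\Sigma_\varphi}\kappa^{2}\D s/E-4\pi^{2}/\Phi$ pointwise by writing $\D s=\sqrt g\,\D u$, and applying the three identities in \eqref{eq:phi_flow} together with $\partial_\varphi\sqrt g=-\kappa\sqrt g/E$. The cubic-in-$\kappa$ contributions cancel verbatim, leaving
\[
\mathscr F'(\varphi)\;=\;2\oint_{\Sigma_\varphi}\frac{\kappa}{E}\,\Delta_{\Sigma_\varphi}\frac{1}{E}\D s.
\]
One integration by parts on the closed curve (using $\Delta_{\Sigma_\varphi}=\partial^{2}/\partial s^{2}$) then recasts this as
\[
\mathscr F'(\varphi)\;=\;-2\oint_{\Sigma_\varphi}\left[\bm n\times\nabla\frac{\kappa}{E}\right]\cdot\left[\bm n\times\nabla\frac{1}{E}\right]\D s\;=\;-2L_{\Sigma_\varphi}\left\langle\left[\bm n\times\nabla\frac{\kappa}{E}\right]\cdot\left[\bm n\times\nabla\frac{1}{E}\right]\right\rangle_{\Sigma_\varphi},
\]
so $\mathscr F'(\varphi)\geq 0$ is literally equivalent to \eqref{eq:grad_prod}.

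For the limit, I would use the multipole expansion already invoked in the proof of the conservation law: since $\kappa/E$ is harmonic on $\Omega$ (Talenti's identity) and tends to the constant $2\pi/\Phi$ at infinity, the same reasoning that gave \eqref{eq:dlogE_est} yields $|\bm n\times\nabla(\kappa/E)|=O(|\bm r|^{-2})$; combined with $|\bm n\times\nabla(1/E)|=O(|\bm r|^{-1})$ and $L_{\Sigma_\varphi}=O(|\bm r|)$ along the large near-circular level sets, the boundary integral above is $O(|\bm r|^{-2})$ and vanishes. Strictness is then inherited from the convexity proposition by the usual propagation argument: an equality $\mathscr F'(\varphi_0)=0$ forces $\mathscr F'\equiv 0$ on $(-\infty,\varphi_0]$, hence $\nabla(\kappa/E)\equiv\bm 0$ on every $\Sigma_\phi$ with $\phi\leq\varphi_0$; since $\kappa/E$ is real-analytic (being harmonic), analytic continuation extends $\kappa/E\equiv 2\pi/\Phi$ to all of $\Omega$, whereupon the conservation law \eqref{eq:Ephi} forces $|\bm n\times\nabla\log E|\equiv 0$, so both $\kappa$ and $E$ are constant on each level set and $\partial\Omega$ must be a circle. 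I expect the asymptotic decay control in the second step to be the main technical point---everything else is a direct reuse of the infrastructure in \S\ref{subsec:ent_curv}--\S\ref{subsec:cons_law}.
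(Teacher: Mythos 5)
Your proposal is correct and follows essentially the same route as the paper: compute $\mathscr F'(\varphi)=2\oint_{\Sigma_\varphi}\frac{\kappa}{E}\Delta_{\Sigma_\varphi}\frac1E\D s$, integrate by parts to identify it with $-2L_{\Sigma_\varphi}$ times the average in \eqref{eq:grad_prod}, establish $\lim_{\varphi\to-\infty}\mathscr F'(\varphi)=0$ from the decay of the tangential gradient of the harmonic function $\kappa/E$, and then integrate the convexity $\mathscr F''\geq0$. The only (immaterial) differences are your sharper estimate $\bigl\vert\bm n\times\nabla\frac1E\bigr\vert=O(|\bm r|^{-1})$ where the paper is content with $O(1)$, and your detour through real-analyticity and the conservation law in the equality case, where the paper argues directly that $\nabla\frac{\kappa}{E}=\mathbf 0$ on each level set already forces $E$ and $\kappa$ to be constant there.
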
\begin{proof}From the convexity $ \mathscr F''(\varphi)\geq0$ we can deduce the monotonicity of $ \mathscr F(\varphi)$:\begin{align}\begin{split}0\geq-
\mathscr F'(\varphi)={}&-\frac{\D}{\D\varphi}\oint_{\Sigma_\varphi}\frac{\kappa^2\D s}{E}=-2\oint_{\Sigma_\varphi}\frac{\kappa}{E}\frac{\partial}{\partial \varphi}\left(\frac{\kappa}{E}\right)E\D s\\={}&-2\oint_{\Sigma_\varphi}\frac{\kappa}{E}\frac{\partial^{2}}{\partial s^{2}}\frac{1}{E}\D s=2\oint_{\Sigma_\varphi}\left[\bm n\times\nabla\frac{\kappa}{E}\right]\cdot\left[\bm n\times\nabla\frac{1}{E} \right]\D s,\end{split}
\end{align}after we establish $ \lim_{\varphi\to-\infty}
\mathscr F'(\varphi)=0$ on the asymptotic behavior:\begin{align}
\left|\bm n\times\nabla\frac{\kappa(\bm r)}{E(\bm r)}\right\vert=O\left( \frac{1}{|\bm r|^2} \right),\quad \left\vert \bm n\times\nabla\frac{1}{E(\bm r)} \right\vert=O(1),
\end{align}as $ |\bm r|\to+\infty$. Here, the tangential derivative of the harmonic function $ \frac\kappa E$ can be estimated in a similar fashion as \eqref{eq:dlogE_est}.

  This proves the statistical (mis)alignment of tangential gradients stated in \eqref{eq:grad_prod}.
This is a strict inequality unless $ \nabla\frac{\kappa}{E}=0$ on all the level sets enclosing $ \Sigma_\varphi$, in view of \eqref{eq:F''}. Since $0=\frac{\partial}{\partial \varphi}\frac{\kappa}{E}=\frac{1}{E}\frac{\partial^{2}}{\partial s^{2}}\frac{1}{E} $ implies constant $E(\bm r)$ on each level set, and $\bm n \times \nabla\frac{\kappa}{E}=\mathbf 0$ further entails constant $ \kappa(\bm r)$ on each level set, the situation  $ \mathscr F'(\varphi)=0$ happens only if $ \partial \Omega$ is a circle.\end{proof}
\begin{remark}
Admittedly, at this point, we have not yet exhausted all the possible geometric integrals that are convex in $\varphi$. By  \eqref{eq:Delta''} and the Kong--Xu equation $ \Delta\frac{\bm n\times\nabla\kappa}{E^{2}}=0$ \cite{KongXu2015}, one can also show that $
\frac{\D^{2} }{\D \varphi^{2}}\oint_{\Sigma_\varphi}E^{-3}|\bm n\times\nabla \kappa|^2\D s\geq0$. However, we are unable to reinterpret the first-order derivative $ \frac{\D }{\D \varphi}\oint_{\Sigma_\varphi}E^{-3}|\bm n\times\nabla \kappa|^2\D s$  as statistical (mis)alignment of geometrically/physically interesting quantities, as in the case of $
\mathscr F'(\varphi) $.\eor\end{remark}

\subsection{Longinetti functional and weighted correlation\label{subsec:Lfunc}}

In \cite[Theorem 4.1]{Longinetti1988}, Longinetti has shown
that $ \mathscr L(\varphi):=\log\oint_{\Sigma_{\varphi}}\frac{\D s}{E}$ is convex in $\varphi$, using  support functions in  convex domains.
 One can generalize his result to non-convex domains.
\begin{proposition}[Convexity of $\mathscr L(\varphi)$]
 In 2-exD, we have   $ \mathscr L''(\varphi)\geq0$, namely\begin{align}\oint_{\Sigma_{\varphi}}\frac{\D s}{E}
\frac{\D^{2} }{\D\varphi^{2}}\oint_{\Sigma_{\varphi}}\frac{\D s}{E} \geq\left(\frac{\D }{\D\varphi}\oint_{\Sigma_{\varphi}}\frac{\D s}{E}\right)^2.\label{eq:L''_nonneg}
\end{align}The equality holds only when $\partial\Omega$ is a circle.\end{proposition}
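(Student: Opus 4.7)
The plan is to establish the logarithmic convexity \eqref{eq:L''_nonneg} by proving the pointwise (in $\varphi$) inequality $I(\varphi)I''(\varphi)\geq [I'(\varphi)]^{2}$, where $I(\varphi):=\oint_{\Sigma_\varphi}\D s/E$, so that $\mathscr L=\log I$ and $\mathscr L''=[II''-(I')^{2}]/I^{2}$. This stays within the paper's own convexity/coarea toolkit rather than Longinetti's support-function framework, and hence does not require any convexity assumption on $\Omega$.

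First, differentiating $I(\varphi)=\oint(1/E)\sqrt g\,\D u$ with the aid of $\partial(1/E)/\partial\varphi=-\kappa/E^{2}$ (from $\partial E/\partial\varphi=\kappa$) and $\partial \sqrt g/\partial\varphi=-\kappa\sqrt g/E$ from \eqref{eq:phi_flow}, the two contributions add rather than cancel, giving
\[
I'(\varphi)=-2\oint_{\Sigma_\varphi}\frac{\kappa\,\D s}{E^{2}}.
\]
Next, I apply \eqref{eq:Delta''} to $f=1/E^{2}$ (noting $Ef=1/E$), which produces $I''(\varphi)=\oint_{\Sigma_\varphi}\Delta(1/E^{2})\,\D s/E$. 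Since $\log E=\R \log f'(x+iy)$ is harmonic on $\Omega$, a short computation using $\Delta e^{-2h}=4e^{-2h}|\nabla h|^{2}-2e^{-2h}\Delta h$ with $h=\log E$ yields $\Delta(1/E^{2})=4|\nabla\log E|^{2}/E^{2}$, and therefore
\[
I''(\varphi)=4\oint_{\Sigma_\varphi}\frac{|\nabla\log E|^{2}}{E^{3}}\,\D s.
\]

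Decomposing the gradient in the orthogonal $(\varphi,u)$-coordinates and using $\partial_{\varphi}\log E=\kappa/E$ from \eqref{eq:phi_flow}, I obtain the pointwise identity $|\nabla\log E|^{2}=\kappa^{2}+|\bm n\times\nabla\log E|^{2}\geq\kappa^{2}$. Combining this with the Cauchy--Schwarz inequality applied to the integrands $1/\sqrt E$ and $|\kappa|/E^{3/2}$ on $\Sigma_\varphi$, one obtains
\[
I\cdot I''\;\geq\;4\oint_{\Sigma_\varphi}\frac{\D s}{E}\oint_{\Sigma_\varphi}\frac{\kappa^{2}\,\D s}{E^{3}}\;\geq\;4\Bigl(\oint_{\Sigma_\varphi}\frac{|\kappa|\,\D s}{E^{2}}\Bigr)^{2}\;\geq\;4\Bigl(\oint_{\Sigma_\varphi}\frac{\kappa\,\D s}{E^{2}}\Bigr)^{2}=[I'(\varphi)]^{2},
\]
which is exactly \eqref{eq:L''_nonneg}.

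For the equality case, equality throughout forces (a) $\bm n\times\nabla\log E\equiv\mathbf 0$ on $\Sigma_\varphi$, so $E$ is constant on $\Sigma_\varphi$; (b) Cauchy--Schwarz saturation, giving $|\kappa|/E\equiv\text{const.}$, which combined with (a) makes $|\kappa|$ constant; (c) $\kappa$ of constant sign on $\Sigma_\varphi$. Together these say that $\kappa$ is a (necessarily positive) constant on $\Sigma_\varphi$, so $\Sigma_\varphi$ is a round circle. The exterior Dirichlet problem on the disk bounded by this circle, subject to the logarithmic asymptotic \eqref{eq:U_inf}, admits a unique radial solution; by unique continuation of $U$ (or analytic continuation of the holomorphic potential $f=U+iV$), the entire level-set family, including $\partial\Omega$, must consist of concentric circles. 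The main obstacle is the last point: reconciling the two independent sources of equality---the pointwise bound $|\nabla\log E|^{2}\geq\kappa^{2}$ and the Cauchy--Schwarz step---so that together they pin down $\kappa$ as a positive constant on $\Sigma_\varphi$ rather than merely $|\kappa|$, and then promoting ``one round level set'' to ``every level set is a concentric circle.''
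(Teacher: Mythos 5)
Your proposal is correct and takes essentially the same route as the paper: both establish the identical formulas $I'(\varphi)=-2\oint_{\Sigma_\varphi}\kappa E^{-2}\D s$ and $I''(\varphi)=4\oint_{\Sigma_\varphi}\bigl(\kappa^{2}E^{-2}+\bigl\vert\bm n\times\nabla E^{-1}\bigr\vert^{2}\bigr)E^{-1}\D s$ (the paper by direct differentiation and integration by parts, you via \eqref{eq:Delta''} together with the harmonicity of $\log E$), and both conclude with what is the same Cauchy--Schwarz/variance inequality for the measure $\D s/E$. Your detour through $\vert\kappa\vert$ only adds the constant-sign condition to the equality analysis, which you resolve correctly with the Umlaufsatz, and your unique-continuation argument for promoting one circular level set to a circular $\partial\Omega$ fills in what the paper leaves to the reader by citing the argument of Corollary \ref{cor:corr_comp}.
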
\begin{proof}
Since the differential formulae in  \eqref{eq:phi_flow} and \eqref{eq:L'phi} bring us\begin{align}
\frac{\D}{\D\varphi}\oint_{\Sigma_\varphi}\frac{\D s}{E}=-2\oint_{\Sigma_\varphi}\frac{\kappa\D s}{E^{2}},
\end{align}and \begin{align}
\frac{\D}{\D\varphi}\oint_{\Sigma_\varphi}\frac{\D s}{E}=-2\oint_{\Sigma_\varphi}\frac{1}{E^{2}}\frac{\partial^{2}}{\partial s^{2}}\frac{1}{E}\D s+4\oint_{\Sigma_\varphi}\frac{\kappa^{2}\D s}{E^{3}}=4\oint_{\Sigma_\varphi}\left( \left\vert \bm n\times\nabla\frac{1}{E} \right\vert^{2}+\frac{\kappa^{2}}{E^{2}} \right)\frac{\D s}{E},
\end{align}we may proceed with the computation\begin{align}&
\oint_{\Sigma_{\varphi}}\frac{\D s}{E}
\frac{\D^{2} }{\D\varphi^{2}}\oint_{\Sigma_{\varphi}}\frac{\D s}{E} -\left(\frac{\D }{\D\varphi}\oint_{\Sigma_{\varphi}}\frac{\D s}{E}\right)^2\notag\\={}&4\left( \oint_{\Sigma_\varphi}\frac{\D s}{E} \right)^2\oint_{\Sigma_\varphi}\left[\left\vert \bm n\times\nabla\frac{1}{E} \right\vert^{2}+\left(\frac{\kappa}{E}-\oint_{\Sigma_\varphi}\frac{\kappa}{E}\D \lambda\right)^{2}\right]\D \lambda\geq0,
\end{align}for a probability measure $ \D \lambda=\frac{\D s}{E}\big/\oint_{\Sigma_\varphi}\frac{\D s}{E}$. So far, we have proved Longinetti's inequality in  \eqref{eq:L''_nonneg}, for all 2-exD problems whose boundary $ \partial \Omega$ is a smooth Jordan curve. Arguing as the proof of Corollary  \ref{cor:corr_comp}, we know that  $ \mathscr L''(\varphi)=0$ happens only when  $ \partial \Omega$ is a circle.\end{proof}

From $    \mathscr L''(\varphi)\geq0 $, we may deduce the inequality stated in \eqref{eq:Longinetti_cor}:\begin{align}
 \mathscr L'(\varphi)=\frac{-2\oint_{\Sigma_\varphi}\frac{\kappa\D s}{E^{2}}}{\oint_{\Sigma_\varphi}\frac{\D s}{E}}\geq\lim_{\phi\to-\infty}\mathscr L'(\phi)=-\frac{4\pi}{\Phi}.\label{eq:L'}
\end{align}This may also be rewritten as a non-negative correlation between $ E^2$ and $ \kappa/E$, weighted by  the probability measure $ \D \lambda=\frac{\D s}{E}\big/\oint_{\Sigma_\varphi}\frac{\D s}{E}$:\begin{align}
\frac{2\pi}{\oint_{\Sigma_\varphi}\frac{\D s}{E}}=\oint_{\Sigma_{\varphi}}E^2\frac{\kappa}{E}\D\lambda\geq\oint_{\Sigma_{\varphi}}E^{2}\D\lambda\oint_{\Sigma_{\varphi}}\frac{\kappa}{E}\D\lambda=\frac{\Phi\oint_{\Sigma_\varphi}\frac{\kappa\D s}{E^{2}}}{\left(\oint_{\Sigma_\varphi}\frac{\D s}{E}\right)^2}.
\end{align}

 \section{Geometric relations for Green's functions\label{sec:Green}}
For the level sets $ \Sigma_\varphi$ of Green's functions in 2-inD, our arguments in \S\ref{sec:ent} still bring us $ \mathscr H''(\varphi)\geq0$, $\mathscr E'(\varphi)=0$ and $ \mathscr F''(\varphi)\geq0$. However, the asymptotic analysis  in 2-inD ($|\bm r|\to 0,\varphi\to+\infty$) is critically different from that in 2-exD ($|\bm r|\to +\infty,\varphi\to-\infty$).

To prove that $ \lim_{\varphi\to+\infty}\mathscr E(\varphi)=\frac{4\pi^2}{\Phi}$, we need two steps. First, we note that the asymptotic behavior in \eqref{eq:kappa_E_asympt} also applies to $|\bm r|\to0$, so $\lim_{\varphi\to+\infty}\oint_{\Sigma_\varphi}\frac{\kappa^2\D s}{E}=\frac{4\pi^{2}}{\Phi}$. Second, in view of the multipole expansion \begin{align}
\log E(|\bm r|(\bm e_x\cos\theta+\bm e_y\sin\theta))=-\log|\bm r|+|\bm r|(c_1'\cos\theta+s_1'\sin\theta)+O( |\bm r|^2)
\end{align}as  $|\bm r|\to0$, we have $ |\bm n\times\nabla\log E(\bm r)|=O(1)$ and $ \frac{\D s}{E(\bm r) }=O( |\bm r|^2\D \theta)$, so $ \lim_{\varphi\to+\infty}\oint_{\Sigma_\varphi}\frac{|\bm n\times\nabla\log E|^2\D s}{E}=0$.

To prove \eqref{eq:cov3'}, one simply checks that \eqref{eq:k-E} and \eqref{eq:k'-E'} remain valid in 2-inD:\begin{align}\begin{split}{}&\frac{\D}{\D\varphi}\left\{
L_{\Sigma_\varphi}\left[
\cov_{\Sigma_{\varphi}}\left( \frac{\kappa}{2\pi},\log \frac{EL_{\Sigma_{\varphi}}}{\Phi}\right)-\cov_{\Sigma_{\varphi}}\left( \frac{E}{\Phi},\log \frac{EL_{\Sigma_{\varphi}}}{\Phi}\right)\right]\right\}\\={}&\frac{1}{\pi}\oint_{\Sigma_\varphi}\frac{|\bm n\times\nabla\log E|^2\D s}{E}\geq0,\end{split}
\end{align}and that \begin{align}
\lim_{\varphi\to+\infty}L_{\Sigma_\varphi}\left[
\cov_{\Sigma_{\varphi}}\left( \frac{\kappa}{2\pi},\log \frac{EL_{\Sigma_{\varphi}}}{\Phi}\right)-\cov_{\Sigma_{\varphi}}\left( \frac{E}{\Phi},\log \frac{EL_{\Sigma_{\varphi}}}{\Phi}\right)\right]=0.
\end{align}

One can then compute directly that $ \lim_{\varphi\to+\infty}\mathscr H'(\varphi)=0$ and $ \lim_{\varphi\to+\infty}
\mathscr F'(\varphi)=0$, so we have $\mathscr H'(\varphi)\leq0$ and $ \mathscr F'(\varphi)\leq0$ for 2-inD, thus confirming \eqref{eq:cov1'} and \eqref{eq:grad_prod'}. To prove \eqref{eq:Longinetti_cor'}, it would suffice to write down $\mathscr  L'(\varphi)\leq\lim_{\phi\to+\infty}\mathscr L'(\phi)=-\frac{4\pi}{\Phi}$, by analogy to  \eqref{eq:L'}.

The equalities in \eqref{eq:cov1'},  \eqref{eq:cov3'}, \eqref{eq:grad_prod'} and  \eqref{eq:Longinetti_cor'} hold only when $ E$ and $\kappa $ both remain constant on each level set, which means that $ \partial \Omega$ is a circle centered at $ \mathbf 0$.

One may further adapt our results for 2-inD to annular domains with the Bernoulli boundary condition imposed on the inner ring.

Before closing this section, we point out that the reversal of inequality signs in 2-inD (as compared to 2-exD)   is not physically unexpected.
As time elapses in  internal diffusion limited aggregation \cite{MeakinDeutch1986,DiaconisFulton1991}, or the corresponding Hele-Shaw flow as its deterministic limit \cite{LevinePeres2010},  the interface becomes more and more circular \cite{LawlerBramsonGriffeath1992,JerisonLevineSheffield2012}, rather than more and more spiky as in   the Witten--Sander process \cite{DLA-PRL,DLA-PRB}. This indicates that the curvature feedback mechanism in 2-inD runs  opposite to its counterpart in 2-exD. \section{Discussions and outlook\label{sec:misc}}

It is appropriate to  consider  Dirichlet problems for harmonic functions in  higher dimensional Euclidean spaces  $ \mathbb R^d$ ($d>2$). One can formulate $ d$-exD as a Laplace equation \begin{align}
\nabla^2 U(\bm r)=0,\quad \bm r\in\Omega\subset \mathbb R^d\tag{\ref{eq:2D_Laplace}$'$}
\end{align}in an unbounded domain $ \Omega$, whose boundary $ \partial \Omega$ is a smooth and connected (hyper)surface, on which $U(\bm r) $ remains constant.  Such a boundary is orientable, with an outward unit normal vector  $\bm n$ defined everywhere on $ \partial \Omega$. The flux condition \begin{align}
-\oint_{\partial\Omega}\bm n\cdot\nabla U(\bm r)\D \Sigma=\Phi>0\tag{\ref{eq:2D_flux}$'$}
\end{align}  (with $\D \Sigma$ being the induced Lebesgue measure on $ \partial\Omega$) translates into  the following asymptotic behavior as  $ |\bm r|$ goes to infinity:\begin{align}
 U(\bm r)\sim \frac{\Phi }{4\pi^{d/2}|\bm r|^{d-2}}\int_0^\infty t^{(d-4)/2}e^{-t}\D t .\tag{\ref{eq:U_inf}$'$}
\end{align}   If $ \mathbf 0\notin\Omega\cup\partial\Omega$, then one can  define the Green's function in  $ d$-inD as a solution to \begin{align}
\left\{\begin{array}{ll}
\nabla^2 G_{D}^{\partial\Omega}(\mathbf 0,\bm r)=0, & \bm r\in\mathbb R^d\smallsetminus(\Omega\cup\partial\Omega\cup\{\mathbf 0\}),\\
G_{D}^{\partial\Omega}(\mathbf 0,\bm r)=\text{const.,} & \bm r\in \partial\Omega, \\\multicolumn{2}{l}{\displaystyle-\lim_{\varepsilon\to0^+}\oint_{|\bm r|=\varepsilon}\bm n\cdot\nabla G_{D}^{\partial\Omega}(\mathbf 0,\bm r)\D \Sigma=1.}
\end{array}\right.\tag{\ref{eq:2-inD_G}$'$}
\end{align} In both   $ d$-exD  and   $ d$-inD, we will be interested in the geometric relations on the level sets $ \Sigma_\varphi$ [equipotential (hyper)surfaces in physical parlance], in a similar vein as \S\S\ref{sec:ent}--\ref{sec:Green}.

The possible existence of critical points (where the gradient of a harmonic function vanishes) forms a major obstacle to finding higher dimensional analogs of  the monotonicity results in 2-exD (\S\ref{sec:ent}) and 2-inD (\S\ref{sec:Green}). Lacking a generalization of  the Riemann mapping theorem to $ \mathbb R^d$ ($d>2$), we usually cannot rule out the critical points, by relying on the smoothness and connectedness of the boundary $ \partial \Omega$ alone.
A recent result of Ma--Zhang \cite[Proposition 3.2]{MaZhang2014} ensures the non-existence of critical points in $ d$-exD and $d$-inD if $ \partial \Omega$ is both smooth and convex. Thus, it is sensible to limit our scope to $ d$-exDc and $d$-inDc, which are problems with the additional constraint on geometric convexity of the boundary $ \partial \Omega$. (For the special case of 3-inDc, one can also deduce the non-existence of critical points from Gergen's answer \cite[(1.1)]{Gergen1931} to a question of Morse.)

Yet another obstacle to obtaining higher dimensional generalizations of the geometric inequalities in \S\S\ref{sec:ent}--\ref{sec:Green} is the unavailability of auxiliary harmonic functions, like those studied by Talenti \cite{Talenti1983} and Kong--Xu \cite{KongXu2015}. Without such ``extra harmonicity'' in higher dimensional spaces, we may lose control of the sign in certain derivatives with respect to $\varphi$.

Despite these difficulties, we can still construct some (partial and conditional) generalizations to the current work in $ \mathbb R^d$ ($d>2$). For example, instead of a conservation law in \S\ref{subsec:cons_law}, our closest analogs in $ \mathbb R^3$ \cite[Theorem 1.1]{3DconvEPS} will be the following inequality for every level set $ \Sigma_\varphi$ in 3-exDc (strict unless $ \partial\Omega $ is a sphere):\begin{align}
\oint_{\Sigma_\varphi}\frac{4[H^2(\bm r)-K(\bm r)]-|\bm n\times\nabla\log |\nabla U(\bm r)| |^{2}}{|\nabla U(\bm r)|}\D S\geq0,\label{eq:3-exDc}
\end{align} and following inequality for every level set $ \Sigma_\varphi$ in 3-inDc (strict unless $ \partial\Omega $ is a sphere centered at the origin):\begin{align}
\oint_{\Sigma_\varphi}\frac{4[H^2(\bm r)-K(\bm r)]-|\bm n\times\nabla\log |\nabla G_{D}^{\partial\Omega}(\mathbf 0,\bm r)| |^{2}}{|\nabla G_{D}^{\partial\Omega}(\mathbf 0,\bm r)|}\D S\leq0.\label{eq:3-inDc}
\end{align}Here, the mean curvature and the Gaussian curvature are denoted by   $H$ and $K $ respectively, while $ \D S$ is the surface measure.
\subsection*{Acknowledgments}Part of this work was assembled from  my research notes in 2006 (on curvature effects in nanophotonics) and 2011 (on entropy in curved spaces). I am grateful to Prof.\ Xiaowei Zhuang (Harvard) and Prof.\ Weinan E (Princeton) for their thought-provoking questions in 2006 and 2011 that inspired these research notes. I  thank Prof.\ David Jerison (MIT) for his comments on \eqref{eq:cov2} in 2011. I am indebted to an anonymous reviewer, whose suggestions helped improve the presentation of this paper.


\begin{thebibliography}{10}

\bibitem{Colding2012}
Tobias~Holck Colding.
\newblock New monotonicity formulas for {R}icci curvature and applications.
  {I}.
\newblock {\em Acta Math.}, 209(2):229--263, 2012.

\bibitem{ColdingMinicozzi2013}
Tobias~Holck Colding and William~P. Minicozzi, II.
\newblock Monotonicity and its analytic and geometric implications.
\newblock {\em Proc. Natl. Acad. Sci. USA}, 110(48):19233--19236, 2013.

\bibitem{ColdingMinicozzi2014}
Tobias~Holck Colding and William~P. Minicozzi, II.
\newblock Ricci curvature and monotonicity for harmonic functions.
\newblock {\em Calc. Var. Partial Differential Equations}, 49(3-4):1045--1059,
  2014.

\bibitem{DiaconisFulton1991}
P.~Diaconis and W.~Fulton.
\newblock A growth model, a game, an algebra, {L}agrange inversion, and
  characteristic classes.
\newblock volume~49, pages 95--119 (1993). 1991.
\newblock Commutative algebra and algebraic geometry, II (Italian) (Turin,
  1990).

\bibitem{Federer1969}
Herbert Federer.
\newblock {\em Geometric measure theory}.
\newblock Die Grundlehren der mathematischen Wissenschaften, Band 153.
  Springer-Verlag New York Inc., New York, 1969.

\bibitem{Gergen1931}
J.~J. Gergen.
\newblock Note on the {G}reen function of a star-shaped three dimensional
  region.
\newblock {\em Amer. J. Math.}, 53(4):746--752, 1931.

\bibitem{Howison1986}
S.~D. Howison.
\newblock Fingering in {H}ele-{S}haw cells.
\newblock {\em J. Fluid Mech.}, 167:439--453, 1986.

\bibitem{Jackson:EM}
John~David Jackson.
\newblock {\em Classical Electrodynamics}.
\newblock John Wiley \& Sons, New York, NY, 3rd edition, 1999.

\bibitem{JerisonLevineSheffield2012}
David Jerison, Lionel Levine, and Scott Sheffield.
\newblock Logarithmic fluctuations for internal {DLA}.
\newblock {\em J. Amer. Math. Soc.}, 25(1):271--301, 2012.

\bibitem{Kakutani1944Brown2D}
Shizuo Kakutani.
\newblock Two-dimensional {B}rownian motion and harmonic functions.
\newblock {\em Proc. Imp. Acad. Tokyo}, 20:706--714, 1944.

\bibitem{KongXu2015}
Shengli Kong and Jinju Xu.
\newblock A remark on the level sets of the graph of harmonic functions bounded
  by two circles in parallel planes.
\newblock {\em J. Partial Differ. Equ.}, 28(3):197--207, 2015.

\bibitem{Laurence1989}
Peter Laurence.
\newblock On the convexity of geometric functionals of level for solutions of
  certain elliptic partial differential equations.
\newblock {\em Z. Angew. Math. Phys.}, 40(2):258--284, 1989.

\bibitem{LawlerBramsonGriffeath1992}
Gregory~F. Lawler, Maury Bramson, and David Griffeath.
\newblock Internal diffusion limited aggregation.
\newblock {\em Ann. Probab.}, 20(4):2117--2140, 1992.

\bibitem{LevinePeres2010}
Lionel Levine and Yuval Peres.
\newblock Scaling limits for internal aggregation models with multiple sources.
\newblock {\em J. Anal. Math.}, 111:151--219, 2010.

\bibitem{Longinetti1988}
Marco Longinetti.
\newblock Some isoperimetric inequalities for the level curves of capacity and
  {G}reen's functions on convex plane domains.
\newblock {\em SIAM J. Math. Anal.}, 19(2):377--389, 1988.

\bibitem{MaZhang2014}
Xi-Nan Ma and Yongbing Zhang.
\newblock The convexity and the {G}aussian curvature estimates for the level
  sets of harmonic functions on convex rings in space forms.
\newblock {\em J. Geom. Anal.}, 24(1):337--374, 2014.

\bibitem{MeakinDeutch1986}
Paul Meakin and J.~M. Deutch.
\newblock The formation of surfaces by diffusion limited annihilation.
\newblock {\em J. Chem. Phys.}, 85(4):2320--2325, 1986.

\bibitem{Saffman1986}
P.~G. Saffman.
\newblock Viscous fingering in {H}ele-{S}haw cells.
\newblock {\em J. Fluid Mech.}, 173:73--94, 1986.

\bibitem{Talenti1983}
Giorgio Talenti.
\newblock On functions, whose lines of steepest descent bend proportionally to
  level lines.
\newblock {\em Ann. Scuola Norm. Sup. Pisa Cl. Sci. (4)}, 10:587--605, 1983.

\bibitem{DLA-PRB}
T.~A. Witten and L.~M. Sander.
\newblock Diffusion-limited aggregation.
\newblock {\em Phys. Rev. B}, 27:5686--5697, 1983.

\bibitem{DLA-PRL}
T.~A. {Witten, Jr.} and L.~M. Sander.
\newblock Diffusion-limited aggregation, a kinetic critical phenomenon.
\newblock {\em Phys. Rev. Lett.}, 47:1400--1403, 1981.

\bibitem{Zhou2013LevelSet}
Yajun Zhou.
\newblock A simple formula for scalar curvature of level sets in {E}uclidean
  spaces.
\newblock \texttt{arXiv:1301.2202} [math.DG], 2013.

\bibitem{3DconvEPS}
Yajun Zhou.
\newblock Two inequalities for convex equipotential surfaces.
\newblock \texttt{arXiv:1912.12778} [math.DG], 2019.

\end{thebibliography}

\end{document}